\title[]{On asymptotic normality in estimation after a group sequential trial}
\author{Ben Berckmoes, Anna Ivanova, Geert Molenberghs}
\keywords{asymptotic normality, confidence interval, group sequential trial, sample mean}
\thanks{Ben Berckmoes is post doctoral fellow at the Fund for Scientific Research of Flanders (FWO)}
\thanks{Financial support from the IAP research network \#P7/06 of the Belgian Government (Belgian Science Policy) is gratefully acknowledged.}
\date{}
\begin{document}

\maketitle

\newtheorem{pro}{Proposition}
\newtheorem{lem}[pro]{Lemma}
\newtheorem{thm}[pro]{Theorem}
\newtheorem{de}[pro]{Definition}
\newtheorem{co}[pro]{Comment}
\newtheorem{no}[pro]{Notation}
\newtheorem{vb}[pro]{Example}
\newtheorem{vbn}[pro]{Examples}
\newtheorem{gev}[pro]{Corollary}
\newtheorem{vrg}[pro]{Question}
\newtheorem{rem}[pro]{Remark}
\newtheorem{lemA}{Lemma}

\begin{abstract}
We prove that in many realistic cases, the ordinary sample mean after a group sequential trial is asymptotically normal if the maximal number of observations increases. We derive that it is often safe to use naive confidence intervals for the mean of the collected observations, based on the ordinary sample mean. Our theoretical findings are confirmed by a simulation study.
\end{abstract}

\section{Introduction}\label{sec:Intro}

Throughout this paper, we will tacitly assume that all random variables are defined on the same probability space. Furthermore, we will keep fixed 
\ \\
- parameters $\mu \in \mathbb{R}$ and $\sigma \in \mathbb{R}^+_0$,
\ \\
- a sequence  $X_1, X_2, \ldots$ of i.i.d. random variables with normal law $N(\mu,\sigma^2)$,
\ \\
- a natural number $L \in \mathbb{N}_0$, 
\ \\
- natural numbers $0 < k_1 < k_2 < \ldots < k_{L + 1}$, 
\ \\
- a Borel measurable map $\psi : \mathbb{R} \rightarrow \left[0,1\right]$,
\ \\
- a parameter $\gamma \in \mathbb{R}^+$.

Also, for each $n \in \mathbb{N}_0$, we will keep fixed a random sample size $N_n$ such that
\ \\
- $N_n$ can take the values $k_1n, \ldots, k_{L + 1}n$,
\ \\
- for each $i \in \{1,\ldots, L\}$, the event $\{N_n = k_i n\}$ does not depend on $X_{k_i n + 1}, X_{k_i n + 2}, \ldots$,
\ \\
- for each $i \in \{1,\ldots,L\}$, the stopping rule
\begin{equation}
\mathbb{P}[N_n = k_i n \mid X_1, \ldots, X_{k_in }] = \psi\left(K_{k_i n}/(k_in)^\gamma\right) \prod_{j = 1}^{i - 1} (1 - \psi)\left(K_{k_j n} / (k_j n)^\gamma\right)\label{eq:StoppingRule}
\end{equation}
holds, where, for $m \in \mathbb{N}_0$, $K_{m} = \sum_{k = 1}^m X_k$, and the empty product is $1$.

For each $n \in \mathbb{N}_0$, the above mathematical model represents a group sequential trial in which the observations $X_1, X_2, \ldots$ are collected, and which has random length $N_n$. After having collected the observations $X_1,\ldots,X_{k_1 n}$, an interim analysis of the sum $K_{k_1 n} =  \sum_{k = 1}^{k_1 n} X_k$ is performed, and, based on the stopping rule (\ref{eq:StoppingRule}) with $i = 1$, one decides whether the trial is stopped, i.e. has length $k_1 n$, or continued, i.e. the additional observations $X_{k_1n + 1}, \ldots, X_{k_{2}n}$ are collected, and a new interim analysis of the sum $K_{k_2 n} =  \sum_{k = 1}^{k_2 n} X_k$ is performed, based on the stopping rule (\ref{eq:StoppingRule}) with $i = 2$. This process continues either until the trial is stopped at a point where an interim analysis is performed, or until the maximal length $k_{L + 1} n$ is reached. Thus the actual length $N_n$ can take the values $k_1n, k_2n, \ldots, k_{L+1}n$.

We wish to point out that the above mathematical model contains a very popular setting in sequential analysis. Indeed, if we let
\ \\
- for each $i \in \{1,\ldots,L+1\}$, $k_i = i$, 
\ \\
- $\psi$ be the map
\begin{equation}
\psi(x) = \left\{\begin{array}{clrr}      
1 &\textrm{ if }& \left|x\right| \geq C\\       
0 &\textrm{ if }&  \left|x\right| < C
\end{array}\right.,\label{eq:PsiEx}
\end{equation}
where $C \in \mathbb{R}^+_0$ is a fixed constant, 

then $N_n$ becomes a random sample size such that
\ \\
- $N_n$ can take the values $n, 2n, \ldots, (L+1)n$, 
\ \\
- for each $i \in \{1,\ldots, L\}$, the event $\{N_n = i n\}$ does not depend on $X_{in + 1}, X_{in + 2}, \ldots$, 
\ \\
- for each $i \in \{1,\ldots,L\}$, the stopping rule (\ref{eq:StoppingRule}) is transformed into

\begin{eqnarray}
\lefteqn{\mathbb{P}[N_n = i n \mid X_1, \ldots, X_{in }]}\label{eq:StoppingRuleSpecific}\\
 &=& \left\{\begin{array}{clrr}     
1 &\textrm{ if }& \left|K_{in} \right| \geq C (in)^\gamma \text{ and } \forall j \in \{1,\ldots,i-1\} : \left|K_{jn} \right| <  C (jn)^\gamma\\       
0 &\textrm{ otherwise }&  \nonumber 
\end{array}\right..\nonumber
\end{eqnarray}
So this situation corresponds to a group sequential trial in which independent observations with law $N(\mu,\sigma^2)$ are collected, and in which an interim analysis of the sum of the observations is performed after every $n$ observations. The stopping rule (\ref{eq:StoppingRuleSpecific}) indicates that the trial is stopped either at the first $i n$ for which $\left|K_{in}\right|$ exceeds $C (in)^\gamma$, or at $(L+ 1)n$. This example has been studied extensively in the literature, see e.g. \cite{S78}, \cite{C89}, and \cite{W92}. The number $\gamma$ is called the shape parameter. The choice $\gamma = 1/2$ is known as the Pocock paradigm, introduced in \cite{P77}, and the choice $\gamma = 0$ as the O'Brien-Fleming paradigm, introduced in \cite{OF79}.

We now return to the general setting, determined by the first two paragraphs of this section. We are interested in the asymptotic behavior of the sample mean $\widehat{\mu}_{N_n} = K_{N_n}/N_n$, used as an estimator for $\mu$, as $n \to \infty$. Notice that, $L$ and the $k_i$ being kept fixed, we are in fact looking at the asymptotics of a group sequential trial in which the maximal sample size $k_{L+1}n$ increases, but in which the number $L$ of interim analyses is fixed, and in which, for each $i \in \{1,\ldots,L\}$, the ratio of the number of observations collected at the $i$th interim analysis, i.e. $k_i n$, to the maximal number of observations, i.e. $k_{L + 1} n$, is a fixed number, i.e. $k_i/k_{L + 1}$.

Very recently, building upon \cite{MKA14}, the bias and the mean squared error (MSE) of the sample mean $\widehat{\mu}_{N_n}$ were studied in \cite{BIM}. From the general results obtained in that paper, it can be easily derived that $\widehat{\mu}_{N_n}$ is asymptotically unbiased in the sense that
$$\mathbb{E}\left[{\widehat{\mu}_{N_n}} - \mu\right] \to 0 \text{ as } n \to \infty \text{ with rate } 1/\sqrt{n},$$
and has a vanishing MSE in the sense that
$$\mathbb{E}\left[\left({\widehat{\mu}_{N_n}} - \mu\right)^2\right] \to 0 \text{ as } n \to \infty \text{ with rate } 1/n.$$
Moreover, the rates $1/\sqrt{n}$ and $1/n$ were shown to be optimal. 

Concerning asymptotic normality of $\widehat{\mu}_{N_n}$, first steps were taken in \cite{BIM}, but the authors were not able to prove a powerful result. In this paper, building upon \cite{BIM}, we will prove an asymptotic normality result for $\widehat{\mu}_{N_n}$ in the general setting. 

The paper is structured as follows. In section \ref{sec:Prelim}, we  formulate some preliminary results from \cite{BIM}. In section \ref{sec:FunEq}, a fundamental equation will be derived from the theory developed in \cite{BIM}. In section \ref{sec:ProofMain}, we provide the main result of this paper, which states that the quantity $\frac{\sqrt{N_n}}{\sigma}\left(\widehat{\mu}_{N_n} - \mu\right)$ converges to $N(0,1)$ for the total variation distance as $n \to \infty$ in each of the following cases:
\begin{itemize}
\item[(A)] $\gamma > 1$ and $\psi$ has a finite limit in $0$, 
\item[(B)] $\gamma = 1$ and $\psi$ has a finite limit in $\mu$, 
\item[(C)] (1)  $1/2 < \gamma < 1$,  $\mu \neq 0$, and $\psi$ has finite limits in $-\infty$ and $\infty$,
\item[\phantom{(c)}] (2) $1/2 < \gamma < 1$, $\mu = 0$, and $\psi$ has a finite limit in $0$,
\item[(D)] $\gamma = 1/2$, $\mu \neq 0$, and $\psi$ has finite limits in $-\infty$ and $\infty$,
\item[(E)] (1) $0 \leq \gamma < 1/2$, $\mu \neq 0$, and $\psi$ has finite limits in $- \infty$ and $\infty$,
\item[\phantom{(e)}] (2) $0 \leq \gamma < 1/2$, $\mu = 0$, and $\psi$ has coinciding finite limits in $- \infty$ and $\infty$.
\end{itemize}
Notice that the conditions imposed on $\psi$ are mild, and are always satisfied if $\psi$ takes the form (\ref{eq:PsiEx}) and $\mu \notin \{-C,C\}$. The proof of the main result is based on the fundamental equation formulated in section \ref{sec:FunEq}. We also state a conclusion for the validity of confidence intervals for the parameter $\mu$ based on $\widehat{\mu}_{N_n}$. In section \ref{sec:TEx}, some examples are given, which show that the conditions under which the main result holds, cannot be omitted. A simulation study is conducted in section \ref{sec:Sim}. Some concluding remarks are formulated in section \ref{sec:ConRem}.

\section{Preliminaries}\label{sec:Prelim}

Let $\phi$ be the standard normal density. For a finite tuple $B = (b_1,\ldots,b_i)$ of bounded Borel measurable maps of $\mathbb{R}$ into $\mathbb{R}$, the {\em normal transform} was defined in \cite{BIM} (section 2, first paragraph) as the map $\mathcal{N}_{B,\mu,\sigma}$ of $]0,\infty[^{i + 1} \times \mathbb{R}$ into $\mathbb{R}$ given by
\begin{eqnarray*}
\lefteqn{\mathcal{N}_{B,\mu,\sigma}(x_1,\ldots,x_{i+1},x)}\label{eq:NormalTransform}\\ 
&=& \frac{\int_{-\infty}^\infty \ldots \int_{-\infty}^\infty \prod_{j=1}^i \frac{\phi\left(\frac{z_j - \mu x_j}{\sigma \sqrt{x_j}}\right)}{\sigma \sqrt{x_j}} b_j\left(\sum_{k=1}^j z_k\right) \frac{\phi\left(\frac{x - \sum_{k=1}^i z_k - \mu x_{i + 1}}{\sigma \sqrt{x_{i + 1}}}\right)}{\sigma \sqrt{x_{i + 1}}} dz_i \ldots dz_1}{\frac{1}{\sigma \sqrt{\sum_{k=1}^{i + 1} x_k}}\phi\left(\frac{x - \mu\sum_{k=1}^{i + 1} x_k}{\sigma \sqrt{\sum_{k=1}^{i+1} x_k}}\right)}.\nonumber
 \end{eqnarray*}
 
 The following recursive description of the normal transform was obtained in \cite{BIM} (section 2, Theorem 3). 
 
 \begin{thm}\label{thm:RecursiveDescriptionNormalTransform}
Let $\xi$ be a random variable with law $N(0,1)$. For a bounded Borel measurable map $b$ of $\mathbb{R}$ into $\mathbb{R}$, $x_1,x_2 \in \left]0,\infty\right[$, and $x \in \mathbb{R}$,
\begin{equation}
\mathcal{N}_{b,\mu,\sigma}(x_1,x_2,x) =  \mathbb{E}\left[b\left(\frac{x_1}{x_1 + x_2} x + \sigma \sqrt{\frac{x_1 x_2}{x_1 + x_2}} \xi\right)\right].\label{eq:NormalTransform1}
\end{equation}
Furthermore, for a natural number $i \geq 2$, a tuple $(b_1,\ldots,b_i)$ of bounded Borel measurable maps of $\mathbb{R}$ into $\mathbb{R}$, $x_1,\ldots,x_{i+1} \in \left]0,\infty\right[$, and $x \in \mathbb{R}$,
\begin{equation}
\mathcal{N}_{(b_1,\ldots,b_{i}), \mu,\sigma}(x_1,\ldots,x_{i+1},x) = \mathcal{N}_{(b_1,\ldots,b_{i-2},\widetilde{b}_{i-1}),\mu,\sigma}(x_1,\ldots,x_{i-1},x_{i} + x_{i + 1},x),\label{eq:NormRec}
\end{equation}
where
\begin{equation}
\widetilde{b}_{i-1}(z) = b_{i-1}(z) \mathbb{E}\left[b_i\left(\frac{x_{i+1}}{x_i + x_{i+1}} z + \frac{x_i}{x_i + x_{i + 1}} x + \sigma \sqrt{\frac{x_i x_{i+1}}{x_i + x_{i+1}}} \xi\right)\right].\label{eq:bTilde}
\end{equation}
\end{thm}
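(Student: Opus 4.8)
The plan is to prove both parts by reducing everything to one elementary identity about products of Gaussian densities: for $p,q \in \left]0,\infty\right[$ and $u,v \in \mathbb{R}$, completing the square in $u$ in the exponents yields
\begin{equation*}
\frac{\phi\left(\frac{u - \mu p}{\sigma\sqrt{p}}\right)}{\sigma\sqrt{p}}\cdot\frac{\phi\left(\frac{v - u - \mu q}{\sigma\sqrt{q}}\right)}{\sigma\sqrt{q}} = \frac{\phi\left(\frac{v - \mu(p+q)}{\sigma\sqrt{p+q}}\right)}{\sigma\sqrt{p+q}}\cdot\frac{\phi\left(\frac{u - \frac{p}{p+q}v}{\sigma\sqrt{\frac{pq}{p+q}}}\right)}{\sigma\sqrt{\frac{pq}{p+q}}}.
\end{equation*}
Probabilistically this is the familiar fact that if $Z \sim N(\mu p, \sigma^2 p)$ and $Z' \sim N(\mu q, \sigma^2 q)$ are independent, then the conditional law of $Z$ given $Z + Z' = v$ is $N\left(\frac{p}{p+q}v, \sigma^2 \frac{pq}{p+q}\right)$; but only the pointwise identity is needed, and its verification is a routine computation.

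For the base case \eqref{eq:NormalTransform1} I would apply this identity inside the numerator of $\mathcal{N}_{b,\mu,\sigma}(x_1,x_2,x)$ with $p = x_1$, $q = x_2$, $u = z_1$, $v = x$. The first factor produced on the right is exactly the denominator of $\mathcal{N}_{b,\mu,\sigma}(x_1,x_2,x)$, so it cancels and leaves
\begin{equation*}
\mathcal{N}_{b,\mu,\sigma}(x_1,x_2,x) = \int_{-\infty}^\infty b(z_1)\,\frac{\phi\left(\frac{z_1 - \frac{x_1}{x_1+x_2}x}{\sigma\sqrt{\frac{x_1 x_2}{x_1+x_2}}}\right)}{\sigma\sqrt{\frac{x_1 x_2}{x_1+x_2}}}\,dz_1 .
\end{equation*}
The substitution $z_1 = \frac{x_1}{x_1+x_2}x + \sigma\sqrt{\frac{x_1 x_2}{x_1+x_2}}\,\zeta$ then turns the right-hand side into $\int_{-\infty}^\infty b\left(\frac{x_1}{x_1+x_2}x + \sigma\sqrt{\frac{x_1 x_2}{x_1+x_2}}\zeta\right)\phi(\zeta)\,d\zeta = \mathbb{E}\left[b\left(\frac{x_1}{x_1+x_2}x + \sigma\sqrt{\frac{x_1 x_2}{x_1+x_2}}\xi\right)\right]$, where boundedness of $b$ guarantees absolute convergence throughout.

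For the recursion \eqref{eq:NormRec} I would, in the $(i+1)$-fold integral defining $\mathcal{N}_{(b_1,\ldots,b_i),\mu,\sigma}(x_1,\ldots,x_{i+1},x)$, carry out the innermost integration over $z_i$ first (legitimate by Fubini's theorem, each $b_j$ being bounded). Writing $s = \sum_{k=1}^{i-1} z_k$, the only factors involving $z_i$ are $\frac{\phi\left(\frac{z_i - \mu x_i}{\sigma\sqrt{x_i}}\right)}{\sigma\sqrt{x_i}}$, $b_i(s+z_i)$ and $\frac{\phi\left(\frac{(x-s) - z_i - \mu x_{i+1}}{\sigma\sqrt{x_{i+1}}}\right)}{\sigma\sqrt{x_{i+1}}}$; applying the identity above with $p = x_i$, $q = x_{i+1}$, $u = z_i$, $v = x - s$, and then substituting $z_i = \frac{x_i}{x_i+x_{i+1}}(x-s) + \sigma\sqrt{\frac{x_i x_{i+1}}{x_i+x_{i+1}}}\,\zeta$ (so that $s + z_i = \frac{x_{i+1}}{x_i+x_{i+1}}s + \frac{x_i}{x_i+x_{i+1}}x + \sigma\sqrt{\frac{x_i x_{i+1}}{x_i+x_{i+1}}}\zeta$), the $z_i$-integral becomes
\begin{equation*}
\frac{\phi\left(\frac{x - s - \mu(x_i + x_{i+1})}{\sigma\sqrt{x_i+x_{i+1}}}\right)}{\sigma\sqrt{x_i+x_{i+1}}}\,\mathbb{E}\left[b_i\left(\frac{x_{i+1}}{x_i+x_{i+1}}s + \frac{x_i}{x_i+x_{i+1}}x + \sigma\sqrt{\frac{x_i x_{i+1}}{x_i+x_{i+1}}}\xi\right)\right].
\end{equation*}
Substituting this back, the expectation factor multiplies the surviving term $b_{i-1}\left(\sum_{k=1}^{i-1} z_k\right) = b_{i-1}(s)$ to form precisely $\widetilde{b}_{i-1}(s)$ of \eqref{eq:bTilde}, the leftover Gaussian plays the role of the final factor with $x_{i+1}$ replaced by $x_i + x_{i+1}$, and the denominator is untouched since $\sum_{k=1}^{i+1} x_k = \sum_{k=1}^{i-1} x_k + (x_i + x_{i+1})$. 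Matching the resulting $i$-fold integral term by term against the definition of $\mathcal{N}_{(b_1,\ldots,b_{i-2},\widetilde{b}_{i-1}),\mu,\sigma}(x_1,\ldots,x_{i-1},x_i + x_{i+1},x)$ gives \eqref{eq:NormRec}; for $i = 2$ the reduced tuple is the singleton $(\widetilde{b}_1)$ and one simply reads off \eqref{eq:NormalTransform1} instead. The only point requiring care is this last piece of bookkeeping — checking that after the $z_i$-integration every surviving factor lands in the correct slot of the reduced normal transform, including the degenerate case $i = 2$ — but no genuinely hard step is involved.
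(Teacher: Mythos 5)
Your proof is correct. Note that the paper does not actually prove this statement---it is imported as a preliminary from \cite{BIM} (Theorem~3 there)---so there is no in-paper argument to compare against; your route via the pointwise Gaussian factorization $f_Z(u)f_{Z'}(v-u)=f_{Z+Z'}(v)\,f_{Z\mid Z+Z'=v}(u)$, applied to integrate out the innermost variable $z_i$ and cancel (resp.\ reassemble) the denominator, is the standard and natural derivation, and the bookkeeping---the identification of $\widetilde{b}_{i-1}$, the surviving Gaussian taking the slot with parameter $x_i+x_{i+1}$, and the unchanged denominator since $\sum_{k=1}^{i+1}x_k=\sum_{k=1}^{i-1}x_k+(x_i+x_{i+1})$---all checks out. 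The only blemish is the harmless miscount of the integral over $z_1,\ldots,z_i$ as ``$(i+1)$-fold'' (it is $i$-fold), which does not affect the argument.
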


For all $n \in \mathbb{N}_0$ and $i \in \{1,\ldots, L\}$, put
\begin{equation}
m_{n,i} = k_i n,\label{eq:mni}
\end{equation}
and 
\begin{equation}
M_n = k_{L + 1} n.\label{eq:Mn}
\end{equation}
Also, for all $n \in \mathbb{N}_0$ and $i \in  \{1,\ldots, L\}$, let $\psi_{m_{n,i}}$ be the Borel measurable map defined by
\begin{equation}
\psi_{m_{n,i}}(x) = \psi(x/ m_{n,i}^\gamma).\label{eq:specifyPsiMni}
\end{equation}
Then, by the second paragraph of section \ref{sec:Intro}, $N_n$ is a random sample size such that
\ \\
- $N_n$ can take the values $m_{n,1}, \ldots, m_{n,L}, M_n$,
\ \\
- for each $i \in \{1,\ldots, L\}$, the event $\{N_n = m_{n,i}\}$ does not depend on $X_{m_{n,i} + 1}, X_{m_{n,i} + 2}, \ldots$,
\ \\
- for each $i \in \{1,\ldots,L\}$, the stopping rule
\begin{equation*}
\mathbb{P}[N_n = m_{n,i} \mid X_1, \ldots, X_{m_{n,i}}] = \psi_{m_{n,i}}\left(K_{m_{n,i}}\right) \prod_{j = 1}^{i - 1} (1 - \psi_{m_{n,j}})\left(K_{m_{n,j}}\right)\label{eq:StoppingRuleBis}
\end{equation*}
holds. 

This brings us exactly in the setting of \cite{BIM} (section 1, first paragraph). Now, as in \cite{BIM} (section 3, first paragraph), put 
\begin{equation}
\Delta m_{n,1} = m_{n,1}\label{eq:Deltamn1}
\end{equation}
and
\begin{equation}
\mathcal{N}_{n,1}(x) = 1,\label{eq:N1}
\end{equation}
and, for $i \in \{2,\ldots,L\}$, 
\begin{equation}
\Delta_{m_{n,i}} = m_{n,i} - m_{n,i-1}\label{eq:DeltaMni}
\end{equation}
and
\begin{equation}
\mathcal{N}_{n,i}(x) = \mathcal{N}_{\left(1 - \psi_{m_{n,1}}, \ldots, 1 - \psi_{m_{n,i-1}}\right),\mu, \sigma}(\Delta_{m_{n,1}},\ldots,\Delta_{m_{n,i}},x).\label{eq:Ni}
\end{equation}
The following result was obtained in \cite{BIM} (section 7, Lemma 17). 

\begin{thm}\label{thm:FunEqNormTran}
Let $\xi$ and $\eta$ be independent random variables with law $N(0,1)$. Then, for each $n \in \mathbb{N}_0$ and each bounded Borel measurable test function $h : \mathbb{R} \rightarrow \mathbb{R}$,
\begin{equation}
\mathbb{E}\left[\left\{h(\xi) - h\left(\frac{\sqrt{N_n}}{\sigma}\left(\widehat{\mu}_{N_n} - \mu\right)\right)\right\}\right] = \sum_{i=1}^L \mathbb{E}\left[h(\xi)\left\{\mathcal{M}_i(n) - \widetilde{\mathcal{M}}_i(n)\right\} \right],\label{eq:FunEqNormTran}
\end{equation}
with
\begin{equation*}
\mathcal{M}_i(n) = \left(\psi_{m_{n,i}} \mathcal{N}_{n,i}\right)\left(\mu m_{n,i} + \sigma \sqrt{m_{n,i}} \xi\right)
\end{equation*}
and
\begin{equation*}
\widetilde{\mathcal{M}}_i(n) = \left(\psi_{m_{n,i}} \mathcal{N}_{n,i}\right)\left(\mu m_{n,i} + \sigma \sqrt{\frac{m_{n,i}(M_n - m_{n,i})}{M_n}} \xi + \sigma \frac{m_{n,i}}{\sqrt{M_n}} \eta\right).
\end{equation*}
\end{thm}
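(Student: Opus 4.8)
The plan is to establish (\ref{eq:FunEqNormTran}) by evaluating its left-hand side after splitting it over the values that the random sample size $N_n$ can take, and then matching the resulting expressions to the right-hand side term by term. First I would write
\[
\mathbb{E}\left[h\left(\frac{\sqrt{N_n}}{\sigma}\left(\widehat{\mu}_{N_n} - \mu\right)\right)\right] = \sum_{i=1}^{L} \mathbb{E}\left[h\left(\frac{K_{m_{n,i}} - \mu m_{n,i}}{\sigma\sqrt{m_{n,i}}}\right)\mathbf{1}_{\{N_n = m_{n,i}\}}\right] + \mathbb{E}\left[h\left(\frac{K_{M_n} - \mu M_n}{\sigma\sqrt{M_n}}\right)\mathbf{1}_{\{N_n = M_n\}}\right],
\]
using $\widehat{\mu}_{N_n} = K_{m_{n,i}}/m_{n,i}$ on $\{N_n = m_{n,i}\}$ and $\widehat{\mu}_{N_n} = K_{M_n}/M_n$ on $\{N_n = M_n\}$. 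Since the factor multiplying $\mathbf{1}_{\{N_n = m_{n,i}\}}$ is $\sigma(X_1,\ldots,X_{m_{n,i}})$-measurable, conditioning on $X_1,\ldots,X_{m_{n,i}}$ and using the stopping rule (\ref{eq:StoppingRule}) replaces $\mathbf{1}_{\{N_n = m_{n,i}\}}$ by the weight $w_{n,i} := \psi_{m_{n,i}}(K_{m_{n,i}})\prod_{j=1}^{i-1}(1-\psi_{m_{n,j}})(K_{m_{n,j}})$, and, using in addition the independence-of-the-future property, $\mathbf{1}_{\{N_n = M_n\}}$ by $w_{n,L+1} := \prod_{j=1}^{L}(1-\psi_{m_{n,j}})(K_{m_{n,j}})$. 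The purely algebraic telescoping identity $\sum_{i=1}^{L} w_{n,i} + w_{n,L+1} = 1$, together with the fact that $(K_{M_n} - \mu M_n)/(\sigma\sqrt{M_n})$ is itself $N(0,1)$, gives $\mathbb{E}[h(\xi)] = \sum_{i=1}^{L}\mathbb{E}[h((K_{M_n}-\mu M_n)/(\sigma\sqrt{M_n}))\,w_{n,i}] + \mathbb{E}[h((K_{M_n}-\mu M_n)/(\sigma\sqrt{M_n}))\,w_{n,L+1}]$. Subtracting, the $w_{n,L+1}$-terms cancel and the left-hand side of (\ref{eq:FunEqNormTran}) becomes $\sum_{i=1}^{L}\left(\mathbb{E}[h((K_{M_n}-\mu M_n)/(\sigma\sqrt{M_n}))\,w_{n,i}] - \mathbb{E}[h((K_{m_{n,i}}-\mu m_{n,i})/(\sigma\sqrt{m_{n,i}}))\,w_{n,i}]\right)$.

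The second ingredient is the identity $\mathbb{E}\left[\prod_{j=1}^{i-1}(1-\psi_{m_{n,j}})(K_{m_{n,j}}) \mid K_{m_{n,i}}\right] = \mathcal{N}_{n,i}(K_{m_{n,i}})$. This is essentially a restatement of the definition of the normal transform: $K_{m_{n,1}},\ldots,K_{m_{n,i-1}}$ are the partial sums of independent Gaussian blocks of sizes $\Delta_{m_{n,1}},\ldots,\Delta_{m_{n,i}}$ adding up to $m_{n,i}$, so the above conditional expectation is exactly the ratio of integrals defining $\mathcal{N}_{(1-\psi_{m_{n,1}},\ldots,1-\psi_{m_{n,i-1}}),\mu,\sigma}$ at $(\Delta_{m_{n,1}},\ldots,\Delta_{m_{n,i}},K_{m_{n,i}})$, which is $\mathcal{N}_{n,i}(K_{m_{n,i}})$ by (\ref{eq:Ni}); alternatively it follows by induction on $i$ from the recursive description in Theorem~\ref{thm:RecursiveDescriptionNormalTransform}. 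Applying this to the ``interim'' term, conditioning on $K_{m_{n,i}}$ and then writing $K_{m_{n,i}} = \mu m_{n,i} + \sigma\sqrt{m_{n,i}}\,\xi$ in law turns $\mathbb{E}[h((K_{m_{n,i}}-\mu m_{n,i})/(\sigma\sqrt{m_{n,i}}))\,w_{n,i}]$ into $\mathbb{E}[h(\xi)\,(\psi_{m_{n,i}}\mathcal{N}_{n,i})(\mu m_{n,i} + \sigma\sqrt{m_{n,i}}\,\xi)] = \mathbb{E}[h(\xi)\mathcal{M}_i(n)]$.

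The more delicate term is $\mathbb{E}[h((K_{M_n}-\mu M_n)/(\sigma\sqrt{M_n}))\,w_{n,i}]$. Conditioning first on $\sigma(X_1,\ldots,X_{m_{n,i}})$ and writing $K_{M_n} = K_{m_{n,i}} + (X_{m_{n,i}+1}+\cdots+X_{M_n})$, where the second summand is independent of the first and $N(\mu(M_n-m_{n,i}),\sigma^2(M_n-m_{n,i}))$-distributed, one gets
\[
\frac{K_{M_n} - \mu M_n}{\sigma\sqrt{M_n}} = \sqrt{\frac{m_{n,i}}{M_n}}\cdot\frac{K_{m_{n,i}} - \mu m_{n,i}}{\sigma\sqrt{m_{n,i}}} + \sqrt{\frac{M_n - m_{n,i}}{M_n}}\,\eta,
\]
with $\eta \sim N(0,1)$ independent of $X_1,\ldots,X_{m_{n,i}}$. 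The conditional expectation of $h$ of this quantity is a function of $K_{m_{n,i}}$ alone, so I can again condition on $K_{m_{n,i}}$ to bring in $\mathcal{N}_{n,i}$ and then substitute $K_{m_{n,i}} = \mu m_{n,i} + \sigma\sqrt{m_{n,i}}\,\xi$. An orthogonal change of the independent standard normal pair $(\xi,\eta)$---which leaves their joint distribution invariant---then transfers the mixing from the argument of $h$ onto the argument of $\psi_{m_{n,i}}\mathcal{N}_{n,i}$, producing exactly $\mathbb{E}[h(\xi)\widetilde{\mathcal{M}}_i(n)]$. Summing the ``interim'' and ``maximal-sample'' contributions over $i$ then yields (\ref{eq:FunEqNormTran}).

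I expect the main obstacle to be this last step, namely keeping the Gaussian bookkeeping straight so that the auxiliary standard normal $\eta$ ends up in precisely the combination occurring in $\widetilde{\mathcal{M}}_i(n)$; the consistency check is that $\sigma^2 m_{n,i}(M_n - m_{n,i})/M_n + \sigma^2 m_{n,i}^2/M_n = \sigma^2 m_{n,i}$, so the arguments of $\psi_{m_{n,i}}\mathcal{N}_{n,i}$ in $\mathcal{M}_i(n)$ and $\widetilde{\mathcal{M}}_i(n)$ share the marginal law $N(\mu m_{n,i},\sigma^2 m_{n,i})$ and differ only through their joint law with $\xi$. A secondary point that must not be overlooked is the telescoping cancellation of the $w_{n,L+1}$-term, without which the two sides of (\ref{eq:FunEqNormTran}) would not match; everything else---the conditioning arguments, the passage between partial sums of Gaussians and the normal transform, and the change of variables---is routine given Theorem~\ref{thm:RecursiveDescriptionNormalTransform} and the framework inherited from \cite{BIM}.
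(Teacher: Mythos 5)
Your overall strategy is the natural one, and since the paper itself gives no proof of Theorem \ref{thm:FunEqNormTran} (it is imported from \cite{BIM}, Lemma 17), the statement is the only thing to check your proposal against: the decomposition over $\{N_n=m_{n,i}\}$, the telescoping identity $\sum_{i=1}^{L}w_{n,i}+w_{n,L+1}=1$, the identification $\mathbb{E}[\prod_{j<i}(1-\psi_{m_{n,j}})(K_{m_{n,j}})\mid K_{m_{n,i}}]=\mathcal{N}_{n,i}(K_{m_{n,i}})$, and the final Gaussian rotation are exactly the right ingredients. The difficulty is the last step, which you flag as the delicate one and then do not carry out: if you do carry it out, you do \emph{not} land on (\ref{eq:FunEqNormTran}) as printed. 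Your own reduction gives $\mathbb{E}[h(\xi)-h(\frac{\sqrt{N_n}}{\sigma}(\widehat{\mu}_{N_n}-\mu))]=\sum_{i}\left(\mathbb{E}[h(Z)w_{n,i}]-\mathbb{E}[h(Z_i)w_{n,i}]\right)$ with $Z=(K_{M_n}-\mu M_n)/(\sigma\sqrt{M_n})$ and $Z_i=(K_{m_{n,i}}-\mu m_{n,i})/(\sigma\sqrt{m_{n,i}})$, and you correctly show $\mathbb{E}[h(Z_i)w_{n,i}]=\mathbb{E}[h(\xi)\mathcal{M}_i(n)]$; hence the $\mathcal{M}_i(n)$ term necessarily enters with a \emph{minus} sign, the opposite of (\ref{eq:FunEqNormTran}). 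Moreover, the rotation applied to $\mathbb{E}[h(Z)w_{n,i}]$ must reproduce $\mathrm{Cov}(Z,K_{m_{n,i}})=\sigma m_{n,i}/\sqrt{M_n}$ as the covariance between $\xi$ and the argument of $\psi_{m_{n,i}}\mathcal{N}_{n,i}$, so it yields the argument $\mu m_{n,i}+\sigma\frac{m_{n,i}}{\sqrt{M_n}}\xi+\sigma\sqrt{\frac{m_{n,i}(M_n-m_{n,i})}{M_n}}\eta$, i.e.\ the printed $\widetilde{\mathcal{M}}_i(n)$ with the coefficients of $\xi$ and $\eta$ interchanged. Your marginal-variance consistency check cannot detect this, because (as you yourself observe) the two orderings differ precisely through their joint law with $\xi$, and that joint law is what the identity is about; they agree only when $M_n=2m_{n,i}$.

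A concrete test: take $L=1$, $k_1=1$, $k_2=2$, $\mu=0$, $\sigma=1$, $\psi=h=1_{\left]-\infty,0\right]}$. Then the left-hand side is $\mathbb{P}[\xi\le 0]-\mathbb{P}[K_{N_n}/\sqrt{N_n}\le 0]=\frac12-\left(\frac12+\frac18\right)=-\frac18$, while the right-hand side of (\ref{eq:FunEqNormTran}) is $\mathbb{E}\left[1_{\xi\le0}\left\{1_{\xi\le0}-1_{\xi+\eta\le0}\right\}\right]=\frac12-\frac38=+\frac18$. So the identity as printed fails as an equality of signed quantities; what your argument actually proves is $\mathbb{E}[h(\xi)-h(\frac{\sqrt{N_n}}{\sigma}(\widehat{\mu}_{N_n}-\mu))]=\sum_{i=1}^{L}\mathbb{E}[h(\xi)\{\widetilde{\mathcal{M}}_i'(n)-\mathcal{M}_i(n)\}]$ with $\widetilde{\mathcal{M}}_i'(n)=(\psi_{m_{n,i}}\mathcal{N}_{n,i})(\mu m_{n,i}+\sigma\frac{m_{n,i}}{\sqrt{M_n}}\xi+\sigma\sqrt{\frac{m_{n,i}(M_n-m_{n,i})}{M_n}}\eta)$. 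These discrepancies are immaterial for the rest of the paper, which only ever uses $|\mathcal{M}_i(n)-\widetilde{\mathcal{M}}_i(n)|$ and the distributional shape of the coefficients, and whose Section \ref{sec:TEx} examples have $M_n=2m_{n,1}$, where the transposition is invisible and the sign is absorbed by the supremum of absolute values. But your proposal, as written, claims to arrive at (\ref{eq:FunEqNormTran}) verbatim; you must either state and prove the corrected version, or acknowledge a genuine gap at exactly the step you identified as the main obstacle.
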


In the next section of this paper, we will combine the recursive description of the normal transform in Theorem \ref{thm:RecursiveDescriptionNormalTransform} with equation (\ref{eq:FunEqNormTran}) in Theorem \ref{thm:FunEqNormTran}, to obtain a fundamental equation.

\section{A fundamental equation}\label{sec:FunEq}

We will state a fundamental equation in Theorem \ref{thm:FunEq}. We first need two lemmas. 

\begin{lem}\label{lem:FirstLemRecNorm}
For each $i \in \{2,\ldots,L\}$, the following assertion holds. For each collection of real numbers $0 < x_1 < x_2 < \ldots < x_i$, there exist fixed continuous random variables $\rho_{i,1}, \rho_{i,2}, \ldots,\rho_{i,i-1}$, only depending on $\sigma$ and the $x_j$, such that, for each $n \in \mathbb{N}_0$, each collection $b_1,\ldots,b_{i-1}$ of bounded Borel measurable maps of $\mathbb{R}$ into $\mathbb{R}$, and each $x \in \mathbb{R}$,
\begin{equation}
\mathcal{N}_{(b_1,\ldots,b_{i-1}),\mu,\sigma}(x_1n, x_2n,\ldots,x_in,x) = \mathbb{E}\left[\prod_{j=1}^{i-1} b_j\left(\frac{\sum_{k=1}^j x_k}{\sum_{k = 1}^i x_k} x +  \rho_{i,j} \sqrt{n}\right)\right].\label{eq:firstlem}
\end{equation}
\end{lem}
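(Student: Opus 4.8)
The plan is to prove Lemma \ref{lem:FirstLemRecNorm} by induction on $i$, peeling off one argument at a time using the recursive description of the normal transform in Theorem \ref{thm:RecursiveDescriptionNormalTransform}. The base case $i = 2$ is exactly equation (\ref{eq:NormalTransform1}): with $x_1 n$ and $x_2 n$ in place of $x_1, x_2$ we get
\begin{equation*}
\mathcal{N}_{b_1,\mu,\sigma}(x_1 n, x_2 n, x) = \mathbb{E}\left[b_1\left(\frac{x_1}{x_1 + x_2} x + \sigma \sqrt{\frac{x_1 x_2}{x_1 + x_2}} \sqrt{n}\, \xi\right)\right],
\end{equation*}
so one sets $\rho_{2,1} = \sigma \sqrt{x_1 x_2/(x_1 + x_2)}\, \xi$, a fixed continuous random variable depending only on $\sigma$ and $x_1, x_2$ (and in particular not on $n$, $b_1$, or $x$). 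Note the key point that the $\sqrt{n}$ coming from scaling $x_1, x_2$ by $n$ inside $\sigma \sqrt{x_1 x_2 n^2/((x_1+x_2)n)} = \sigma\sqrt{n}\sqrt{x_1 x_2/(x_1+x_2)}$ factors cleanly out of the standard deviation, while $x/(x_1 n + x_2 n)$ does \emph{not} simplify — but this is not what appears: the recursion (\ref{eq:NormalTransform1}) puts $x$ over $x_1 + x_2$ without the $n$, so in our scaled instance the coefficient of $x$ is $x_1/(x_1+x_2)$, matching the right-hand side of (\ref{eq:firstlem}) with $\sum_{k=1}^2 x_k$ in the denominator. Good.

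For the inductive step, suppose the claim holds for $i - 1$ (with $i \geq 3$). Given $0 < x_1 < \cdots < x_i$, apply the recursion (\ref{eq:NormRec}) with $x_j \rightsquigarrow x_j n$: it reduces $\mathcal{N}_{(b_1,\ldots,b_{i-1}),\mu,\sigma}(x_1 n,\ldots,x_i n,x)$ to $\mathcal{N}_{(b_1,\ldots,b_{i-3},\widetilde{b}_{i-2}),\mu,\sigma}(x_1 n,\ldots,x_{i-2}n, (x_{i-1}+x_i)n, x)$, where by (\ref{eq:bTilde})
\begin{equation*}
\widetilde{b}_{i-2}(z) = b_{i-2}(z)\, \mathbb{E}\left[b_{i-1}\left(\frac{x_i}{x_{i-1}+x_i} z + \frac{x_{i-1}}{x_{i-1}+x_i} x + \sigma \sqrt{\frac{x_{i-1} x_i}{x_{i-1}+x_i}}\sqrt{n}\,\xi\right)\right].
\end{equation*}
Now $\mathcal{N}_{(b_1,\ldots,b_{i-3},\widetilde{b}_{i-2}),\mu,\sigma}(\cdots)$ is a normal transform with $i - 1$ arguments $x_1 n, \ldots, x_{i-2}n, (x_{i-1}+x_i)n$, i.e. with the rescaled tuple $(x_1, \ldots, x_{i-2}, x_{i-1}+x_i)$, which is still strictly increasing since $x_{i-1} + x_i > x_{i-2} + x_{i-2}$... actually one only needs $x_{i-1}+x_i > x_{i-2}$, which holds. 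So the induction hypothesis applies and gives continuous random variables $\rho_{i-1,1},\ldots,\rho_{i-1,i-2}$, depending only on $\sigma$ and $x_1,\ldots,x_{i-2},x_{i-1}+x_i$, with
\begin{equation*}
\mathcal{N}_{(b_1,\ldots,b_{i-3},\widetilde{b}_{i-2}),\mu,\sigma}(x_1 n,\ldots,x_{i-2}n,(x_{i-1}+x_i)n,x) = \mathbb{E}\left[\prod_{j=1}^{i-2} \beta_j\left(\frac{\sum_{k=1}^j x_k}{\sum_{k=1}^{i} x_k} x + \rho_{i-1,j}\sqrt{n}\right)\right],
\end{equation*}
where $\beta_j = b_j$ for $j < i - 2$, $\beta_{i-2} = \widetilde{b}_{i-2}$, and the denominator $\sum_{k=1}^{i-2} x_k + (x_{i-1}+x_i) = \sum_{k=1}^i x_k$ is precisely the denominator we want.

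The final bookkeeping step is to unfold $\beta_{i-2} = \widetilde{b}_{i-2}$ inside the expectation. Evaluating $\widetilde{b}_{i-2}$ at $\frac{\sum_{k=1}^{i-2} x_k}{\sum_{k=1}^i x_k} x + \rho_{i-1,i-2}\sqrt{n}$ produces a factor $b_{i-2}\big(\frac{\sum_{k=1}^{i-2}x_k}{\sum_{k=1}^i x_k} x + \rho_{i-1,i-2}\sqrt{n}\big)$ times a conditional expectation (over the independent copy $\xi$) of $b_{i-1}$ evaluated at
\begin{equation*}
\frac{x_i}{x_{i-1}+x_i}\left(\frac{\sum_{k=1}^{i-2}x_k}{\sum_{k=1}^i x_k} x + \rho_{i-1,i-2}\sqrt{n}\right) + \frac{x_{i-1}}{x_{i-1}+x_i} x + \sigma\sqrt{\frac{x_{i-1}x_i}{x_{i-1}+x_i}}\sqrt{n}\,\xi.
\end{equation*}
The coefficient of $x$ here simplifies: $\frac{x_i}{x_{i-1}+x_i}\cdot\frac{\sum_{k=1}^{i-2}x_k}{\sum_{k=1}^i x_k} + \frac{x_{i-1}}{x_{i-1}+x_i} = \frac{x_i \sum_{k=1}^{i-2}x_k + x_{i-1}\sum_{k=1}^i x_k}{(x_{i-1}+x_i)\sum_{k=1}^i x_k}$, and a short algebraic check shows this equals $\frac{\sum_{k=1}^{i-1} x_k}{\sum_{k=1}^i x_k}$ (expand: numerator $= x_i\sum_{k=1}^{i-2}x_k + x_{i-1}\sum_{k=1}^{i-2}x_k + x_{i-1}^2 + x_{i-1}x_i = (x_{i-1}+x_i)\sum_{k=1}^{i-2}x_k + x_{i-1}(x_{i-1}+x_i) = (x_{i-1}+x_i)\sum_{k=1}^{i-1}x_k$). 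The remaining terms are all of the form (constant depending only on $\sigma$ and the $x_j$)$\times \sqrt{n}$, plus $\sigma\sqrt{x_{i-1}x_i/(x_{i-1}+x_i)}\sqrt{n}\,\xi$; collecting them defines $\rho_{i,i-1}$ as an explicit affine combination of $\xi$ and $\rho_{i-1,i-2}$, hence a continuous random variable depending only on $\sigma$ and the $x_j$. Relabelling $\rho_{i,j} := \rho_{i-1,j}$ for $j \le i-2$ then yields (\ref{eq:firstlem}). The one subtlety to handle carefully — and the only place the argument is not purely mechanical — is making sure the random variables $\xi$ used at each level of the recursion are taken to be genuinely independent of those produced by the induction hypothesis (so that the nested expectations compose into a single expectation over a product measure), and that $\rho_{i-1,i-2}$, being continuous, keeps $\rho_{i,i-1}$ continuous; both are routine once one fixes, say, a countable family of independent $N(0,1)$ variables at the outset and draws from it as needed. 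The rescaling-commutes-with-the-recursion observation (that multiplying all $x_j$ by $n$ pulls $\sqrt n$ out of every standard deviation while leaving the $x$-coefficients $n$-free) is the structural fact that makes the lemma true, and it is worth stating explicitly before launching the induction.
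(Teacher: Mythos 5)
Your proof is correct and follows essentially the same route as the paper's: induction on the number of arguments, using the recursion (\ref{eq:NormRec})--(\ref{eq:bTilde}) to merge the last two arguments, applying the induction hypothesis to the tuple $(x_1,\ldots,x_{i-2},x_{i-1}+x_i)$, verifying the same coefficient identity for $x$, and absorbing the $\sqrt{n}$-terms into an affine combination of $\rho_{i-1,i-2}$ and an independent standard normal (the paper likewise fixes $\xi$ independent of the inherited $\rho$'s). The only differences are cosmetic (you induct from $i-1$ to $i$ rather than $i$ to $i+1$), and your explicit remarks on the $\sqrt{n}$-factoring and on independence across recursion levels are exactly the structural points the paper's argument relies on.
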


\begin{proof}
Assume that $L \geq 2$. The proof is by induction on $i \in \{2,\ldots,L\}$, using Theorem \ref{thm:RecursiveDescriptionNormalTransform}.

Take $i = 2$, strictly positive real numbers $x_1,x_2$, $n \in \mathbb{N}_0$, a bounded Borel measurable map $b$ of $\mathbb{R}$ into $\mathbb{R}$, and $x \in \mathbb{R}$. Fix a standard normally distributed random variable $\xi$. Then, by (\ref{eq:NormalTransform1}), 
$$\mathcal{N}_{b_1,\mu,\sigma} (x_1 n , x_2 n, x) = \mathbb{E}\left[b_1\left(\frac{x_1}{x_1 + x_2} x + \sigma \sqrt{\frac{x_1 x_2}{x_1 + x_2}} \xi \sqrt{n}\right)\right].$$
Thus, putting 
$$\rho_{2,1} = \sigma \sqrt{\frac{x_1 x_2}{x_1 + x_2}} \xi,$$
shows that the desired assertion holds for $i = 2$.

Now assume that $L > 2$ and that the desired assertion holds for some $i \in \{2,\ldots,L-1\}$. We will prove that it also holds for $i + 1$. Take strictly positive real numbers $x_1, x_2, \ldots, x_{i + 1}$, $n \in \mathbb{N}_0$, bounded Borel measurable maps $b_1, \ldots, b_i$ of $\mathbb{R}$ into $\mathbb{R}$, and $x \in \mathbb{R}$. By (\ref{eq:NormRec}) and (\ref{eq:bTilde}),
\begin{equation}
\mathcal{N}_{(b_1,\ldots,b_{i}), \mu,\sigma}(x_1 n,\ldots,x_{i+1} n,x) = \mathcal{N}_{(b_1,\ldots,b_{i-2},\widetilde{b}_{i-1}),\mu,\sigma}(x_1 n ,\ldots,x_{i-1} n,(x_{i} + x_{i + 1}) n,x),\label{eq:NormRecUsed}
\end{equation}
where, for any standard normally distributed random variable $\xi$,
\begin{equation}
\widetilde{b}_{i-1}(z) = b_{i-1}(z) \mathbb{E}\left[b_i\left(\frac{x_{i+1}}{x_i + x_{i+1}} z + \frac{x_i}{x_i + x_{i + 1}} x + \sigma \sqrt{\frac{x_i x_{i+1}}{x_i + x_{i+1}}} \xi \sqrt{n}\right)\right].\label{eq:bTildeUsed}
\end{equation}
By the induction hypothesis applied to $x_1, \ldots, x_{i-1}, x_i + x_{i +1}$, there exist fixed continuous random variables $\rho_{i,1},\ldots,\rho_{i,i-1}$, only depending on $\sigma$ and the $x_j$, such that 
\begin{eqnarray}
\lefteqn{\mathcal{N}_{(b_1,\ldots,b_{i-2},\widetilde{b}_{i-1}),\mu,\sigma}(x_1 n ,\ldots,x_{i-1} n,(x_{i} + x_{i + 1}) n,x)}\label{eq:IHUsed}\\
&=& \mathbb{E}\left[\prod_{j=1}^{i-2} b_j\left(\frac{\sum_{k=1}^j x_k}{\sum_{k = 1}^{i + 1} x_k} x +  \rho_{i,j} \sqrt{n}\right) \widetilde{b}_{i - 1}\left(\frac{\sum_{k=1}^{i-1} x_k}{\sum_{k = 1}^{i + 1} x_k} x + \rho_{i,i-1} \sqrt{n}\right)\right]\nonumber.
\end{eqnarray}
But then, fixing a standard normally distributed random variable $\xi$, stochastically independent of $\rho_{i,1},\ldots,\rho_{i,i-1}$, and combining (\ref{eq:NormRecUsed}), (\ref{eq:bTildeUsed}) and (\ref{eq:IHUsed}), gives
\begin{eqnarray*}
\lefteqn{\mathcal{N}_{(b_1,\ldots,b_{i}), \mu,\sigma}(x_1 n,\ldots,x_{i+1} n,x)}\\
&=&  \mathbb{E}{\Huge [}\prod_{j=1}^{i-1} b_j\left(\frac{\sum_{k=1}^j x_k}{\sum_{k = 1}^{i + 1} x_k} x + \rho_{i,j} \sqrt{n}\right)\\
&& b_i\left(\frac{x_{i+1}}{x_i + x_{i+1}} \left(\frac{\sum_{k=1}^{i-1} x_k}{\sum_{k = 1}^{i + 1} x_k} x +  \rho_{i,i-1} \sqrt{n}\right) + \frac{x_i}{x_i + x_{i + 1}} x + \sigma \sqrt{\frac{x_i x_{i+1}}{x_i + x_{i+1}}} \xi \sqrt{n}\right){\Huge]}\\
&=& \mathbb{E}{\Huge [}\prod_{j=1}^{i-1} b_j\left(\frac{\sum_{k=1}^j x_k}{\sum_{k = 1}^{i + 1} x_k} x + \rho_{i,j} \sqrt{n}\right)\\
&& b_i\left(\frac{\sum_{k = 1}^i x_i}{\sum_{k = 1}^{i + 1} x_k} x + \left(\frac{x_{i+1}}{x_i + x_{i+1}} \rho_{i,i-1} + \sigma \sqrt{\frac{x_i x_{i+1}}{x_i + x_{i+1}}} \xi \right) \sqrt{n} \right){\Huge]}.
\end{eqnarray*}
Thus, considering the collection of continuous random variables $\widetilde{\rho}_{i+1,1}, \ldots, \widetilde{\rho}_{i+1,i}$,  defined by
$$\forall j \in \{1,\ldots,i-1\} : \widetilde{\rho}_{i+1,j} = \rho_{i,j} $$
and
$$\widetilde{\rho}_{i + 1, i} = \frac{x_{i+1}}{x_i + x_{i+1}} \rho_{i,i-1} + \sigma \sqrt{\frac{x_i x_{i+1}}{x_i + x_{i+1}}} \xi,$$
shows that the desired assertion holds for $i + 1$. 

This finishes the proof.
\end{proof}

\begin{lem}\label{lem:SecondLemRecNorm}
Fix everything as in the first two paragraphs of section 1. Then, for each $i \in \{2,\ldots,L\}$, there exist fixed continuous random variables $\sigma_{i,1}, \sigma_{i,2}, \ldots,\sigma_{i,i-1}$, only depending on $\gamma$, $\sigma$ and the $k_j$, such that, for all $n \in \mathbb{N}_0$ and $x \in \mathbb{R}$,  
$$\mathcal{N}_{n,i}(x) = \mathbb{E}\left[\prod_{j = 1}^{i - 1} (1 - \psi) \left(\frac{k_j^{1 - \gamma}}{k_i} n^{-\gamma} x + \sigma_{i,j} n^{\frac{1}{2} - \gamma}\right)\right].$$
\end{lem}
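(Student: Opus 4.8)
The plan is to deduce this lemma directly from Lemma \ref{lem:FirstLemRecNorm} by a single substitution. First I would unwind the definition (\ref{eq:Ni}): we have
$$\mathcal{N}_{n,i}(x) = \mathcal{N}_{(1-\psi_{m_{n,1}},\ldots,1-\psi_{m_{n,i-1}}),\mu,\sigma}(\Delta_{m_{n,1}},\ldots,\Delta_{m_{n,i}},x),$$
and, by (\ref{eq:Deltamn1}), (\ref{eq:DeltaMni}) and (\ref{eq:mni}), each of the first $i$ arguments is of the form $x_j n$, where $x_1 = k_1$ and $x_j = k_j - k_{j-1}$ for $j \in \{2,\ldots,i\}$. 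Since $0 < k_1 < k_2 < \ldots < k_{L+1}$, these satisfy $0 < x_1 < x_1 + x_2 < \ldots$, so the hypotheses of Lemma \ref{lem:FirstLemRecNorm} hold with this choice of $x_j$; moreover $\sum_{k=1}^j x_k = k_j$ for every $j$.

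Next I would apply Lemma \ref{lem:FirstLemRecNorm} with $b_j = 1 - \psi_{m_{n,j}}$, which are bounded Borel measurable maps because $\psi$ takes values in $[0,1]$. This produces continuous random variables $\rho_{i,1},\ldots,\rho_{i,i-1}$, depending only on $\sigma$ and the $x_j$ — hence only on $\sigma$ and the $k_j$ — and satisfying
$$\mathcal{N}_{n,i}(x) = \mathbb{E}\left[\prod_{j=1}^{i-1}(1-\psi_{m_{n,j}})\left(\frac{k_j}{k_i}\,x + \rho_{i,j}\sqrt{n}\right)\right].$$
Then I would substitute the definition (\ref{eq:specifyPsiMni}), namely $\psi_{m_{n,j}}(y) = \psi(y/(k_j n)^\gamma)$, and simplify the argument of each factor:
$$\frac{1}{(k_j n)^\gamma}\left(\frac{k_j}{k_i}\,x + \rho_{i,j}\sqrt{n}\right) = \frac{k_j^{1-\gamma}}{k_i}\, n^{-\gamma} x + \frac{\rho_{i,j}}{k_j^{\gamma}}\, n^{\frac12 - \gamma}.$$
Setting $\sigma_{i,j} = \rho_{i,j}/k_j^{\gamma}$, which is again a continuous random variable depending only on $\gamma$, $\sigma$ and the $k_j$, gives the asserted identity.

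I do not expect a genuine obstacle here; the only points requiring care are bookkeeping ones: verifying that the increments $x_j$, which sum to $k_j$, indeed meet the strict-increase hypothesis of Lemma \ref{lem:FirstLemRecNorm}, and confirming that the random variables it supplies are independent of $n$, of $x$, and of the test maps $b_j$, which is exactly the content of the "only depending on $\sigma$ and the $x_j$" clause in that lemma. The case $i = 2$ is the base case and is handled by the same computation with the single variable $\rho_{2,1}$.
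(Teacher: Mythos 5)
Your proposal is correct and is essentially identical to the paper's own proof: apply Lemma \ref{lem:FirstLemRecNorm} to the increments $k_1, k_2-k_1,\ldots,k_i-k_{i-1}$, note that the partial sums telescope to $k_j$, substitute (\ref{eq:specifyPsiMni}), and set $\sigma_{i,j}=\rho_{i,j}/k_j^{\gamma}$. The one point worth flagging (which the paper glosses over in the same way you do) is that the increments need not satisfy the literal hypothesis $0<x_1<x_2<\cdots<x_i$ of Lemma \ref{lem:FirstLemRecNorm}; only their strict positivity holds in general, but an inspection of that lemma's proof shows positivity is all that is actually used.
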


\begin{proof}
Applying Lemma \ref{lem:FirstLemRecNorm} to the strictly positive numbers $k_1, k_2 - k_1, \ldots, k_i - k_{i-1}$, reveals the existence of fixed continuous random variables $\rho_{i,1},\ldots,\rho_{i,i-1}$, only depending on $\sigma$ and the $k_j$, such that, by (\ref{eq:firstlem}), keeping the notations (\ref{eq:mni}), (\ref{eq:Deltamn1}), (\ref{eq:DeltaMni}) and (\ref{eq:Ni}) in mind, 
$$\mathcal{N}_{n,i}(x) = \mathbb{E}\left[\prod_{j=1}^{i-1} \left(1 - \psi_{m_{n,j}}\right)\left(\frac{k_j}{k_i} x +  \rho_{i,j} \sqrt{n}\right)\right],$$
which, by (\ref{eq:specifyPsiMni}),
$$= \mathbb{E}\left[\prod_{j=1}^{i-1} \left(1 - \psi\right)\left(\frac{k_j^{1 - \gamma}}{k_i} n^{-\gamma} x +  \frac{\rho_{i,j}}{k_j^\gamma} n^{\frac{1}{2} - \gamma}\right)\right],$$
which, putting $\sigma_{i,j} = \frac{\rho_{i,j}}{k_j^\gamma}$, proves the desired result.
\end{proof}

\begin{thm}\label{thm:FunEq}
Fix everything as in the first two paragraphs of section \ref{sec:Intro}. Then there exist fixed continuous random variables $\alpha_1, \ldots, \alpha_{L}$ and $\gamma_1,\ldots, \gamma_{L}$, only depending on $\gamma$, $\sigma$ and the $k_j$, and, for all $i \in \{2,\ldots,L\}$ and $j \in \{1,\ldots,i-1\}$, fixed continuous random variables $\beta_{i,j}$ and $\delta_{i,j}$, only depending on $\gamma$, $\sigma$ and the $k_j$, and a standard normally distributed random variable $\xi$, such that, for each $n \in \mathbb{N}_0$ and each bounded Borel measurable test function $h : \mathbb{R} \rightarrow \mathbb{R}$, 
\begin{equation*}
\mathbb{E}\left[\left\{h(\xi) - h\left(\frac{\sqrt{N_n}}{\sigma}\left(\widehat{\mu}_{N_n} - \mu\right)\right)\right\}\right] = \sum_{i=1}^{L} \mathbb{E}\left[h(\xi) \left\{\mathcal{R}_i(n) - \widetilde{\mathcal{R}}_i(n)\right\}\right],\label{eq:FunEq}
\end{equation*}
with
\begin{equation*}
\mathcal{R}_i(n) = \psi\left(k_i^{1 - \gamma} n^{1 - \gamma} \mu + \alpha_i n^{\frac{1}{2} - \gamma}\right) \prod_{j = 1}^{i-1} (1 - \psi)\left(k_{j}^{1-\gamma} n^{1 - \gamma}\mu +\beta_{i,j} n^{\frac{1}{2} - \gamma}\right) 
\end{equation*}
and
\begin{equation*}
\widetilde{\mathcal{R}}_i(n) = \psi\left(k_i^{1 - \gamma} n^{1 - \gamma} \mu + \gamma_i n^{\frac{1}{2} - \gamma}\right) \prod_{j = 1}^{i-1} (1 - \psi)\left(k_{j}^{1-\gamma} n^{1 - \gamma}\mu + \delta_{i,j}n^{\frac{1}{2} - \gamma} \right),
\end{equation*}
the empty product being $1$.
\end{thm}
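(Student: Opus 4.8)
The plan is to derive the fundamental equation simply by substituting the probabilistic representation of the normal transform from Lemma~\ref{lem:SecondLemRecNorm} into the identity (\ref{eq:FunEqNormTran}) of Theorem~\ref{thm:FunEqNormTran}, and then reading the auxiliary random variables off the resulting expressions; the proof is almost entirely bookkeeping of constants. First I would fix the underlying probability space: besides the independent standard normals $\xi$ and $\eta$ of Theorem~\ref{thm:FunEqNormTran} (the $\xi$ being the one occurring in $h(\xi)$), it carries, for every $i \in \{2,\ldots,L\}$, the continuous random variables $\sigma_{i,1},\ldots,\sigma_{i,i-1}$ supplied by Lemma~\ref{lem:SecondLemRecNorm}, and these may be chosen independent of $(\xi,\eta)$, because, unwinding the proof of Lemma~\ref{lem:FirstLemRecNorm}, each $\rho_{i,j}$, hence each $\sigma_{i,j} = \rho_{i,j}/k_j^\gamma$, is an explicit linear combination of standard normals that can be drawn from a reservoir disjoint from $\{\xi,\eta\}$.

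Next I would treat the terms $\mathbb{E}[h(\xi)\,\mathcal{M}_i(n)]$. For $i = 1$ we have $\mathcal{N}_{n,1} \equiv 1$ by (\ref{eq:N1}), so, using $\psi_{m_{n,1}}(x) = \psi(x/(k_1 n)^\gamma)$ from (\ref{eq:specifyPsiMni}) and substituting the argument $x = \mu k_1 n + \sigma\sqrt{k_1 n}\,\xi$, a division by $(k_1 n)^\gamma$ gives $\mathcal{M}_1(n) = \psi\big(k_1^{1-\gamma} n^{1-\gamma}\mu + \alpha_1 n^{1/2-\gamma}\big)$ with $\alpha_1 = \sigma k_1^{1/2-\gamma}\xi$, which is $\mathcal{R}_1(n)$ (the empty product being $1$). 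For $i \in \{2,\ldots,L\}$ I would first rewrite, for each fixed real $x$, $\mathcal{N}_{n,i}(x)$ via Lemma~\ref{lem:SecondLemRecNorm} as the expectation over the $\sigma_{i,j}$ of $\prod_{j=1}^{i-1}(1-\psi)\big(\tfrac{k_j^{1-\gamma}}{k_i} n^{-\gamma} x + \sigma_{i,j} n^{1/2-\gamma}\big)$; multiplying by the factor $\psi_{m_{n,i}}(x)$, which does not involve the $\sigma_{i,j}$, and then inserting the random argument $x = \mu k_i n + \sigma\sqrt{k_i n}\,\xi$, the independence of $(\sigma_{i,j})_j$ from $\xi$ together with the tower property yields $\mathbb{E}[h(\xi)\,\mathcal{M}_i(n)] = \mathbb{E}[h(\xi)\,\mathcal{R}_i(n)]$, where $\mathcal{R}_i(n)$ is exactly the expression in the statement with $\alpha_i = \sigma k_i^{1/2-\gamma}\xi$ and $\beta_{i,j} = \sigma k_j^{1-\gamma} k_i^{-1/2}\xi + \sigma_{i,j}$. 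The point to observe is that the separation of the $n^{1-\gamma}\mu$ part from the $n^{1/2-\gamma}$ part is precisely what confines all coefficients, hence the joint law of the auxiliary variables, to depend only on $\gamma$, $\sigma$ and the $k_j$, and not on $n$, $\mu$ or $h$; continuity of $\beta_{i,j}$ follows since its $\xi$-coefficient is nonzero and $\xi$ is independent of $\sigma_{i,j}$.

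The term $\mathbb{E}[h(\xi)\,\widetilde{\mathcal{M}}_i(n)]$ is handled in exactly the same way, the only change being that the random argument is now $y = \mu k_i n + \sigma\sqrt{\tfrac{k_i(k_{L+1}-k_i)}{k_{L+1}}}\,\sqrt{n}\,\xi + \sigma k_i k_{L+1}^{-1/2}\sqrt{n}\,\eta$ (using $m_{n,i} = k_i n$ and $M_n = k_{L+1} n$); performing the analogous rescalings and averaging over the $\sigma_{i,j}$ produces $\mathbb{E}[h(\xi)\,\widetilde{\mathcal{M}}_i(n)] = \mathbb{E}[h(\xi)\,\widetilde{\mathcal{R}}_i(n)]$ with $\widetilde{\mathcal{R}}_i(n)$ as in the statement, $\gamma_i = \sigma k_i^{1/2-\gamma}\sqrt{\tfrac{k_{L+1}-k_i}{k_{L+1}}}\,\xi + \sigma k_i^{1-\gamma} k_{L+1}^{-1/2}\eta$ and $\delta_{i,j} = \sigma k_j^{1-\gamma} k_i^{-1/2}\sqrt{\tfrac{k_{L+1}-k_i}{k_{L+1}}}\,\xi + \sigma k_j^{1-\gamma} k_{L+1}^{-1/2}\eta + \sigma_{i,j}$ (here $k_i < k_{L+1}$ guarantees the $\xi$-coefficient is nonzero, so $\gamma_i$ and $\delta_{i,j}$ are continuous). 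Feeding the two identities $\mathbb{E}[h(\xi)\,\mathcal{M}_i(n)] = \mathbb{E}[h(\xi)\,\mathcal{R}_i(n)]$ and $\mathbb{E}[h(\xi)\,\widetilde{\mathcal{M}}_i(n)] = \mathbb{E}[h(\xi)\,\widetilde{\mathcal{R}}_i(n)]$ into (\ref{eq:FunEqNormTran}) and summing over $i$ delivers the asserted equation, all the auxiliary variables (and $\eta$) living on the single fixed probability space and none of them depending on $n$.

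The only step requiring genuine care is the measure-theoretic one: substituting a $\sigma(\xi,\eta)$-measurable argument into the representation of $\mathcal{N}_{n,i}$ from Lemma~\ref{lem:SecondLemRecNorm}. For this to be legitimate one must have arranged at the outset that the $\sigma_{i,j}$ are independent of $\xi$ and $\eta$, so that $\mathcal{N}_{n,i}(Y) = \mathbb{E}\big[\prod_{j=1}^{i-1}(1-\psi)(\cdots) \,\big|\, Y\big]$ for the random $Y$ in play, an identity which then survives multiplication by the $\sigma(\xi)$-measurable factor $h(\xi)$ and by the $\sigma(\xi,\eta)$-measurable factor $\psi_{m_{n,i}}(Y)$; the rest is collecting the $n^{1-\gamma}\mu$ and $n^{1/2-\gamma}$ terms. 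The case $i = 1$ is kept separate because Lemma~\ref{lem:SecondLemRecNorm} is only stated for $i \ge 2$.
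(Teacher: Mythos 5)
Your proposal is correct and follows essentially the same route as the paper: substitute the representation of $\mathcal{N}_{n,i}$ from Lemma \ref{lem:SecondLemRecNorm} into the identity of Theorem \ref{thm:FunEqNormTran}, with $\xi,\eta$ chosen independent of the $\sigma_{i,j}$, and read off $\alpha_i$, $\beta_{i,j}$, $\gamma_i$, $\delta_{i,j}$ by separating the $n^{1-\gamma}\mu$ and $n^{1/2-\gamma}$ terms; your explicit formulas agree with the paper's after elementary algebra. Your added care about the tower-property step and the continuity of the resulting variables is a welcome (if implicit in the paper) refinement rather than a deviation.
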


\begin{proof}
By Lemma \ref{lem:SecondLemRecNorm}, for each $i \in \{2,\ldots,L\}$, there exists a fixed collection $\sigma_{i,1}, \sigma_{i,2}, \ldots,\sigma_{i,i-1}$ of continuous random variables, only depending on $\gamma$, $\sigma$ and the $k_j$, such that, for all $n \in \mathbb{N}_0$ and $x \in \mathbb{R}$,  
\begin{equation}
\mathcal{N}_{n,i}(x) = \mathbb{E}\left[\prod_{j = 1}^{i - 1} (1 - \psi) \left(\frac{k_j^{1 - \gamma}}{k_i} n^{-\gamma} x + \sigma_{i,j} n^{\frac{1}{2} - \gamma}\right)\right].\label{eq:FormNni}
\end{equation}
Then fix stochastically independent standard normally distributed random variables $\xi$ and $\eta$, each stochastically independent of all the $\sigma_{i,j}$. By Theorem \ref{thm:FunEqNormTran},
\begin{equation}
\mathbb{E}\left[\left\{h(\xi) - h\left(\frac{\sqrt{N_n}}{\sigma}\left(\widehat{\mu}_{N_n} - \mu\right)\right)\right\}\right] = \sum_{i=1}^L \mathbb{E}\left[h(\xi)\left\{\mathcal{M}_i(n) - \widetilde{\mathcal{M}}_i(n)\right\} \right],\label{eq:FunEqNormTranUsed}
\end{equation}
with
\begin{equation}
\mathcal{M}_i(n) = \left(\psi_{m_{n,i}} \mathcal{N}_{n,i}\right)\left(\mu m_{n,i} + \sigma \sqrt{m_{n,i}} \xi\right)\label{eq:MinUsed}
\end{equation}
and
\begin{equation}
\widetilde{\mathcal{M}}_i(n) = \left(\psi_{m_{n,i}} \mathcal{N}_{n,i}\right)\left(\mu m_{n,i} + \sigma \sqrt{\frac{m_{n,i}(M_n - m_{n,i})}{M_n}} \xi + \sigma \frac{m_{n,i}}{\sqrt{M_n}} \eta\right)\label{eq:MinTildeUsed}.
\end{equation}
Combining (\ref{eq:N1}), (\ref{eq:FormNni}), (\ref{eq:MinUsed}), (\ref{eq:mni}) and (\ref{eq:specifyPsiMni}), reveals that, for each $i \in \{1,\ldots,L\}$,
$$\mathbb{E}\left[h(\xi)\mathcal{M}_i(n)\right]$$
can be written as the expected value of the product with factors
\begin{eqnarray}
\lefteqn{h(\xi)\psi_{m_{n,i}}(\mu m_{n,i} + \sigma \sqrt{m_{n,i}} \xi)}\nonumber\\
&=& h(\xi)\psi\left(m_{n,i}^{1 - \gamma} \mu + \sigma m_{n,i}^{\frac{1}{2} - \gamma}\right)\nonumber\\
&=& h(\xi)\psi\left(k_i^{1 - \gamma} n^{1 - \gamma} \mu + \sigma k_i^{\frac{1}{2} - \gamma} \xi n^{\frac{1}{2} - \gamma}\right)\label{eq:Factor1}
\end{eqnarray}
and
\begin{eqnarray}
\lefteqn{\prod_{j = 1}^{i - 1} (1 - \psi) \left(\frac{k_j^{1 - \gamma}}{k_i} n^{-\gamma} \left(\mu m_{n,i} + \sigma \sqrt{m_{n,i}} \xi\right) + \sigma_{i,j} n^{\frac{1}{2} - \gamma}\right)}\nonumber\\
&=& \prod_{j = 1}^{i - 1} (1 - \psi) \left(k_j^{1 - \gamma} n^{1 - \gamma} \mu +  \left( \sigma \frac{k_j^{1 - \gamma}}{\sqrt{k_i}} \xi + \sigma_{i,j}\right) n^{\frac{1}{2} - \gamma}\right),\label{eq:Factor2}
\end{eqnarray}
the empty product being $1$.
Analogously, combining (\ref{eq:N1}), (\ref{eq:FormNni}), (\ref{eq:MinTildeUsed}), (\ref{eq:mni}), (\ref{eq:Mn}) and (\ref{eq:specifyPsiMni}), shows that, for each $i \in \{1,\ldots,L\}$,
$$\mathbb{E}\left[h(\xi)\widetilde{\mathcal{M}}_i(n)\right]$$
can be written as the expected value of the product with factors
\begin{eqnarray}
\lefteqn{h(\xi)\psi_{m_{n,i}}\left(\mu m_{n,i} + \sigma \sqrt{\frac{m_{n,i}(M_n - m_{n,i})}{M_n}} \xi + \sigma \frac{m_{n,i}}{\sqrt{M_n}} \eta\right)}\nonumber\\
&=& h(\xi) \psi\left(m_{n,i}^{1-\gamma} \mu + \sigma \sqrt{\frac{M_{n} - m_{n,i}}{M_{n}}} m_{n,i}^{\frac{1}{2} - \gamma} \xi + \sigma \sqrt{\frac{m_{n,i}}{M_n}} m_{n,i}^{\frac{1}{2} - \gamma} \eta\right)\nonumber\\
&=& h(\xi) \psi\left(k_i^{1-\gamma} n^{1 - \gamma}\mu + \sigma k_i^{\frac{1}{2} - \gamma}\left(\sqrt{\frac{k_{L + 1} - k_i}{k_{L + 1}}} \xi +  \sqrt{\frac{k_i}{k_{L + 1}}} \eta\right)  n^{\frac{1}{2} - \gamma}\right)\label{eq:Factor3}
\end{eqnarray}
and
\begin{eqnarray}
\lefteqn{\prod_{j = 1}^{i - 1} (1 - \psi) \left(\frac{k_j^{1 - \gamma}}{k_i} n^{-\gamma} \left(\mu m_{n,i} + \sigma \sqrt{\frac{m_{n,i}(M_n - m_{n,i})}{M_n}} \xi + \sigma \frac{m_{n,i}}{\sqrt{M_n}} \eta\right) + \sigma_{i,j} n^{\frac{1}{2} - \gamma}\right)}\nonumber\\
&=& \prod_{j = 1}^{i - 1} (1 - \psi) \left(k_j^{1 - \gamma} n^{1 - \gamma} \mu +  \left\{ \sigma \frac{k_j^{1 - \gamma}}{\sqrt{k_i}} \left(\sqrt{\frac{k_{L + 1} - k_i}{k_{L + 1}}} \xi + \sqrt{\frac{k_i}{k_{L + 1}}} \eta\right) + \sigma_{i,j}\right\} n^{\frac{1}{2} - \gamma}\right),\label{eq:Factor4}
\end{eqnarray}
the empty product being $1$.
Now, for $i \in \{1,\ldots,L\}$, put in (\ref{eq:Factor1})
$$\alpha_i = \sigma k_i^{\frac{1}{2} - \gamma} \xi,$$ 
for $i \in \{2,\ldots,L\}$ and $j \in \{1,\ldots,i-1\}$, put in (\ref{eq:Factor2})
$$\beta_{i,j} = \sigma \frac{k_j^{1 - \gamma}}{\sqrt{k_i}} \xi + \sigma_{i,j},$$ 
for $i \in \{1,\ldots,L\}$, put in (\ref{eq:Factor3})
$$\gamma_i = \sigma k_i^{\frac{1}{2} - \gamma}\left(\sqrt{\frac{k_{L + 1} - k_i}{k_{L + 1}}} \xi +  \sqrt{\frac{k_i}{k_{L + 1}}} \eta\right),$$
and, for $i \in \{2,\ldots,L\}$ and $j \in \{1,\ldots,i-1\}$, put in (\ref{eq:Factor4})
$$\delta_{i,j} = \sigma \frac{k_j^{1 - \gamma}}{\sqrt{k_i}} \left(\sqrt{\frac{k_{L + 1} - k_i}{k_{L + 1}}} \xi + \sqrt{\frac{k_i}{k_{L + 1}}} \eta\right) + \sigma_{i,j}.$$
This allows us to conclude from (\ref{eq:FunEqNormTranUsed}) that the desired equation holds.
\end{proof}

In the next section, we will derive an asymptotic normality result for $\widehat{\mu}_{N_n}$ from Theorem \ref{thm:FunEq}.

\section{The main result}\label{sec:ProofMain}

Recall that the total variation distance between random variables $X$ and $Y$ is given by
$$d_{TV}(X,Y) = \sup_{A} \left|\mathbb{P}[X \in A] - \mathbb{P}[Y \in A]\right|,$$
the supremum running over all Borel sets $A \subset \mathbb{R}$. It is well known that convergence for the total variation distance is strictly stronger than weak convergence. For more information on this distance, we refer the reader to \cite{R91} and \cite{Z83}.

We are now in a position to state and prove the main result of this paper.

\begin{thm}\label{thm:AsymNorSeq}
Fix everything as in the first two paragraphs of section \ref{sec:Intro}. Furthermore, consider a standard normally distributed random variable $\xi$ and the sample mean $\widehat{\mu}_{N_n} = K_{N_n}/N_n$. Then 
\begin{equation}
d_{TV}\left(\xi,\frac{\sqrt{N_n}}{\sigma}\left(\widehat{\mu}_{N_n} - \mu\right)\right) \to 0 \text{ as } n \to \infty\label{eq:dtvGoesToZero}
\end{equation}
in each of the following cases:
\begin{itemize}
\item[(A)] $\gamma > 1$ and $\psi$ has a finite limit in $0$, 
\item[(B)] $\gamma = 1$ and $\psi$ has a finite limit in $\mu$, 
\item[(C)] (1)  $1/2 < \gamma < 1$,  $\mu \neq 0$, and $\psi$ has finite limits in $-\infty$ and $\infty$,
\item[\phantom{(c)}] (2) $1/2 < \gamma < 1$, $\mu = 0$, and $\psi$ has a finite limit in $0$,
\item[(D)] $\gamma = 1/2$, $\mu \neq 0$, and $\psi$ has finite limits in $-\infty$ and $\infty$,
\item[(E)] (1) $0 \leq \gamma < 1/2$, $\mu \neq 0$, and $\psi$ has finite limits in $- \infty$ and $\infty$,
\item[\phantom{(e)}] (2) $0 \leq \gamma < 1/2$, $\mu = 0$, and $\psi$ has coinciding finite limits in $- \infty$ and $\infty$.
\end{itemize}
In particular, letting $\Phi$ be the standard normal cdf, it holds for each $x \in \mathbb{R}^+_0$ that
\begin{equation}
\left|2 \Phi (x) - 1 - \mathbb{P}\left[\widehat{\mu}_{N_n} - \frac{\sigma}{\sqrt{N_n}} x \leq \mu \leq \widehat{\mu}_{N_n} + \frac{\sigma}{\sqrt{N_n}} x \right]\right| \rightarrow 0 \textrm{ as } n \rightarrow \infty\label{eq:CIcon}
\end{equation}
in each of the cases (A)--(E), i.e. confidence intervals for $\mu$ based on $\widehat{\mu}_{N_n}$ are often reliable if $n$ is large enough.
\end{thm}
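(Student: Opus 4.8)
The plan is to reduce the assertion to an application of the fundamental equation of Theorem~\ref{thm:FunEq} combined with the dominated convergence theorem, and then to read off \eqref{eq:CIcon} from \eqref{eq:dtvGoesToZero}.

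First I would exploit the definition of the total variation distance as a supremum over indicator test functions. Applying the equation of Theorem~\ref{thm:FunEq} to $h = \mathbf{1}_A$ gives, for every Borel set $A \subset \mathbb{R}$,
\[
\mathbb{P}[\xi \in A] - \mathbb{P}\!\left[\tfrac{\sqrt{N_n}}{\sigma}\bigl(\widehat{\mu}_{N_n}-\mu\bigr) \in A\right] = \sum_{i=1}^{L}\mathbb{E}\!\left[\mathbf{1}_A(\xi)\bigl(\mathcal{R}_i(n) - \widetilde{\mathcal{R}}_i(n)\bigr)\right],
\]
so that, since $|\mathbf{1}_A(\xi)| \le 1$,
\[
d_{TV}\!\left(\xi,\tfrac{\sqrt{N_n}}{\sigma}\bigl(\widehat{\mu}_{N_n}-\mu\bigr)\right) \le \sum_{i=1}^{L}\mathbb{E}\bigl[\,\bigl|\mathcal{R}_i(n) - \widetilde{\mathcal{R}}_i(n)\bigr|\,\bigr].
\]
Hence it is enough to show, for each fixed $i \in \{1,\dots,L\}$ and in each of the cases (A)--(E), that $\mathbb{E}\bigl[|\mathcal{R}_i(n) - \widetilde{\mathcal{R}}_i(n)|\bigr] \to 0$ as $n \to \infty$.

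The core of the argument is an almost sure pointwise analysis of $\mathcal{R}_i(n)$ and $\widetilde{\mathcal{R}}_i(n)$. Both are products of at most $i$ factors $\psi(\cdot)$ or $(1-\psi)(\cdot)$, hence both bounded by $1$, evaluated at arguments of the shape
\[
k^{1-\gamma}\,n^{1-\gamma}\,\mu + \zeta\, n^{\frac{1}{2}-\gamma},
\]
where $k \in \{k_1,\dots,k_L\}$ and $\zeta$ is one of the fixed, $n$-independent, almost surely finite continuous random variables $\alpha_i,\beta_{i,j},\gamma_i,\delta_{i,j}$ furnished by Theorem~\ref{thm:FunEq}. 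Comparing the exponents $1-\gamma$ and $\tfrac12-\gamma$ with $0$, and using that the $\zeta$'s do not depend on $n$, I would identify the almost sure limit of each such argument: it converges to $0$ in cases (A) and (C)(2); it converges to $\mu$ in case (B); it converges to $\operatorname{sign}(\mu)\cdot\infty$, a deterministic signed infinity, in cases (C)(1), (D) and (E)(1), since there $\mu\neq 0$ and the term $k^{1-\gamma}n^{1-\gamma}\mu$ dominates $\zeta\,n^{1/2-\gamma}$; and it converges to $\operatorname{sign}(\zeta)\cdot\infty$ in case (E)(2), since there $\mu=0$, $n^{1/2-\gamma}\to\infty$, and $\zeta\neq 0$ almost surely. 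In every case the stated hypothesis on $\psi$ — a finite limit at $0$, at $\mu$, at both $-\infty$ and $\infty$, or coinciding finite limits at $\pm\infty$ — guarantees that $\psi$ and $1-\psi$ applied to these arguments converge almost surely, and to a common value for the arguments occurring in $\mathcal{R}_i(n)$ and in $\widetilde{\mathcal{R}}_i(n)$; hence $\mathcal{R}_i(n) - \widetilde{\mathcal{R}}_i(n) \to 0$ almost surely, and since this difference is bounded by $1$, dominated convergence yields $\mathbb{E}\bigl[|\mathcal{R}_i(n) - \widetilde{\mathcal{R}}_i(n)|\bigr]\to 0$. This proves \eqref{eq:dtvGoesToZero}. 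Finally, \eqref{eq:CIcon} follows by specializing \eqref{eq:dtvGoesToZero} to $A = [-x,x]$, using $\mathbb{P}[|\xi|\le x] = 2\Phi(x)-1$ and observing that $\{|\tfrac{\sqrt{N_n}}{\sigma}(\widehat{\mu}_{N_n}-\mu)|\le x\}$ is precisely the event that $\mu$ belongs to the displayed interval.

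The main obstacle is the case analysis, and in particular pinning down the sign of the dominant term in each regime. The delicate point is to see why the weaker hypothesis that $\psi$ has a finite limit at $0$ suffices in case (C)(2), whereas in case (E)(2) one genuinely needs the limits of $\psi$ at $-\infty$ and $\infty$ to coincide: in (C)(2) the perturbation $\zeta\,n^{1/2-\gamma}$ vanishes, so all arguments cluster at $0$ and the direction of approach is immaterial, while in (E)(2) the perturbation diverges, and since $\alpha_i$ is a multiple of $\xi$ whereas $\gamma_i$ is a nondegenerate mixture of $\xi$ and the auxiliary variable $\eta$, the two random variables have different, genuinely random signs, so $\mathcal{R}_i(n)$ and $\widetilde{\mathcal{R}}_i(n)$ would stabilize at different values on events of positive probability unless $\psi(-\infty)=\psi(\infty)$. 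One must also be careful, at the boundary exponents $\gamma=1$ and $\gamma=1/2$, to read the relevant limit of $\psi$ as a two-sided limit, so that the random sign of the vanishing or bounded perturbation causes no difficulty.
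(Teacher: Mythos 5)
Your proposal is correct and follows essentially the same route as the paper: the bound $d_{TV}\le\sum_{i=1}^{L}\mathbb{E}\bigl[\,\bigl|\mathcal{R}_i(n)-\widetilde{\mathcal{R}}_i(n)\bigr|\,\bigr]$ obtained from Theorem~\ref{thm:FunEq} with indicator test functions, an almost sure identification of the limit of each argument $k^{1-\gamma}n^{1-\gamma}\mu+\zeta n^{\frac12-\gamma}$ in each regime, dominated convergence, and the choice $A=[-x,x]$ for \eqref{eq:CIcon}. In fact you spell out the case analysis (and the reason coinciding limits are needed only in (E)(2)) in more detail than the paper, which works out only case (E)(2) explicitly.
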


\begin{rem}
Our stopping rule (\ref{eq:StoppingRule}) is designed in such a way that it analyzes the sum of the observations based on the null hypothesis $\mu = 0$. This becomes clear if we take $\psi$ of the form (\ref{eq:PsiEx}). Indeed, in this case, the trial is stopped (to reject the null hypothesis $\mu = 0$) at the first $k_i n$ for which $\left|K_{k_in}\right| \geq C (k_i n)^\gamma$. Therefore, the distinction between $\mu = 0$ and $\mu \neq 0$ in Theorem \ref{thm:AsymNorSeq} is natural. Of course, it is possible to effortlessly switch to the null hypothesis $\mu = \mu_0$ for a different parameter $\mu_0 \in \mathbb{R}_0$, by considering the data $X_1 - \mu_0, X_2 - \mu_0, \ldots$
\end{rem}

\begin{rem}
In the case where $\gamma = 1/2$ and $\mu = 0$, we have not stated an asymptotic normality result. In the next section, we will show that such a result fails to hold (Example \ref{vb:Gamma1/2}). This is not a severe drawback, because the case $\gamma=1/2$ corresponds to the Pocock boundary, which is very rarely used in practice, often in favor of the more popular O'Brien-Fleming boundary, because it tends to assign too much type I error at early looks. In other words, it too generously allows for very early stopping (\cite{W87}).
\end{rem}

\begin{proof}[Proof of Theorem \ref{thm:AsymNorSeq}]
By Theorem \ref{thm:FunEq}, fix continuous random variables $\alpha_1, \ldots, \alpha_{L}$ and $\gamma_1,\ldots, \gamma_{L}$, only depending on $\gamma$, $\sigma$ and the $k_j$, and, for all $i \in \{2,\ldots,L\}$ and $j \in \{1,\ldots,i-1\}$, continuous random variables $\beta_{i,j}$ and $\delta_{i,j}$, only depending on $\gamma$, $\sigma$ and the $k_j$, and a standard normally distributed random variable $\xi_0$, such that, for each $n \in \mathbb{N}_0$ and each Borel measurable set $A \subset \mathbb{R}$,
\begin{equation}
\mathbb{E}\left[\left\{1_A(\xi_0) - 1_A\left(\frac{\sqrt{N_n}}{\sigma}\left(\widehat{\mu}_{N_n} - \mu\right)\right)\right\}\right] = \sum_{i=1}^{L} \mathbb{E}\left[ 1_A(\xi_0) \left\{\mathcal{R}_i(n) - \widetilde{\mathcal{R}}_i(n)\right\}\right],\label{eq:FunEqUse}
\end{equation}
with
\begin{equation}
\mathcal{R}_i(n) = \psi\left(k_i^{1 - \gamma} n^{1 - \gamma} \mu + \alpha_i n^{\frac{1}{2} - \gamma}\right) \prod_{j = 1}^{i-1} (1 - \psi)\left(k_{j}^{1-\gamma} n^{1 - \gamma}\mu +\beta_{i,j} n^{\frac{1}{2} - \gamma}\right) \label{eq:RinUse}
\end{equation}
and
\begin{equation}
\widetilde{\mathcal{R}}_i(n) = \psi\left(k_i^{1 - \gamma} n^{1 - \gamma} \mu + \gamma_i n^{\frac{1}{2} - \gamma}\right) \prod_{j = 1}^{i-1} (1 - \psi)\left(k_{j}^{1-\gamma} n^{1 - \gamma}\mu + \delta_{i,j}n^{\frac{1}{2} - \gamma} \right),\label{eq:RinTildeUse}
\end{equation}
the empty product being $1$. Since $\xi$ and $\xi_0$ have the same distribution, we infer from (\ref{eq:FunEqUse}) that the upper bound
\begin{equation}
d_{TV}\left(\xi,\frac{\sqrt{N_n}}{\sigma}\left(\widehat{\mu}_{N_n} - \mu\right)\right) \leq \sum_{i=1}^{L} \mathbb{E}\left[ \left| \mathcal{R}_i(n) - \widetilde{\mathcal{R}}_i(n) \right|\right]\label{eq:dtvUpperBound}
\end{equation}
holds for each $n \in \mathbb{N}_0$. The fact that limit relation (\ref{eq:dtvGoesToZero}) holds in each of the cases (A)--(E), can be easily read off from the upper bound (\ref{eq:dtvUpperBound}) and the expressions (\ref{eq:RinUse}) and (\ref{eq:RinTildeUse}). 

Suppose, for instance, that we are in the case (E) (2), i.e. $0 \leq \gamma < 1/2$, $\mu = 0$, and $\psi$ has coinciding finite limits in $- \infty$ and $\infty$, say with value $\lambda$. The $\alpha_i$, $\beta_{i,j}$, $\gamma_i$ and $\delta_{i,j}$ being continuous, they all differ from zero outside a fixed event with probability zero. Thus, almost surely, each of the expressions $\alpha_i n^{\frac{1}{2} - \gamma}$, $\beta_{i,j} n^{\frac{1}{2} - \gamma}$, $\gamma_i n^{\frac{1}{2} - \gamma}$ and $\delta_{i,j}n^{\frac{1}{2} - \gamma}$ tends to either $-\infty$ or $\infty$ as $n \to \infty$, whence, by (\ref{eq:RinUse}) and (\ref{eq:RinTildeUse}), almost surely, $\mathcal{R}_i(n)$ and $\widetilde{\mathcal{R}}_i(n)$ both tend to $\lambda (1 - \lambda)^{i-1}$ as $n \to \infty$. We conclude from the dominated convergence theorem that the upper bound in (\ref{eq:dtvUpperBound}) tends to $0$.

The other cases are analyzed analogously.

Finally, for each $x \in \mathbb{R}^+_0$, considering the Borel set $A = [-x,x]$, allows us to derive (\ref{eq:CIcon}) from (\ref{eq:dtvGoesToZero}), which finishes the proof.
\end{proof}

\begin{gev}\label{gev:MainCor}
Fix everything as in the first two paragraphs of section \ref{sec:Intro}, and assume in addition that $\psi$ takes the form (\ref{eq:PsiEx}) and $\mu \notin \{-C,C\}$. Then always 
$$d_{TV}\left(N(0,1),\frac{\sqrt{N_n}}{\sigma}\left(\widehat{\mu}_{N_n} - \mu\right)\right) \to 0 \text{ as } n \to \infty,$$
except in the case where $\gamma = 1/2$ and $\mu = 0$. In particular, letting $\Phi$ be the standard normal cdf, it always holds that, for each $x \in \mathbb{R}^+_0$,
$$\left|2 \Phi (x) - 1 - \mathbb{P}\left[\widehat{\mu}_{N_n} - \frac{\sigma}{\sqrt{N_n}} x \leq \mu \leq \widehat{\mu}_{N_n} + \frac{\sigma}{\sqrt{N_n}} x \right]\right| \rightarrow 0 \text{ as } n \rightarrow \infty,$$
except in the case where $\gamma = 1/2$ and $\mu = 0$.
\end{gev}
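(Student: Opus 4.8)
The corollary is an immediate specialization of Theorem \ref{thm:AsymNorSeq}, so the plan is simply to verify that, when $\psi$ has the form (\ref{eq:PsiEx}) and $\mu \notin \{-C,C\}$, the hypotheses of at least one of the cases (A)--(E) of that theorem are met whenever $(\gamma,\mu) \neq (1/2,0)$.

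First I would record the limiting behaviour of the particular $\psi$ in (\ref{eq:PsiEx}). Since $C > 0$, the map $\psi$ vanishes identically on the open interval $]-C,C[$, so $\lim_{x \to 0} \psi(x) = 0$ exists and is finite. For $|x|$ large we have $\psi(x) = 1$, so $\psi$ has finite limits in $-\infty$ and $+\infty$, and these two limits \emph{coincide}, both being equal to $1$. Finally, $\psi$ has a finite limit in $\mu$ precisely when $\mu \notin \{-C,C\}$: the limit equals $0$ if $|\mu| < C$ and $1$ if $|\mu| > C$, while for $|\mu| = C$ no limit exists because of the jump of $\psi$ at $\pm C$. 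This last observation is the only place where the hypothesis $\mu \notin \{-C,C\}$ enters.

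Next I would run through the admissible values of $\gamma \geq 0$ and match each to a case of Theorem \ref{thm:AsymNorSeq}: $\gamma > 1$ falls under (A), using that $\psi$ has a finite limit in $0$; $\gamma = 1$ under (B), using that $\psi$ has a finite limit in $\mu$; for $1/2 < \gamma < 1$, use (C)(1) if $\mu \neq 0$ and (C)(2) if $\mu = 0$; for $0 \leq \gamma < 1/2$, use (E)(1) if $\mu \neq 0$ and (E)(2) if $\mu = 0$, the latter relying on the coincidence of the two limits in $\pm\infty$; and $\gamma = 1/2$ with $\mu \neq 0$ falls under (D). The only pair $(\gamma,\mu)$ left uncovered is $\gamma = 1/2$, $\mu = 0$, which is exactly the excluded case. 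In every non-excluded case, Theorem \ref{thm:AsymNorSeq} gives the claimed total variation convergence, and the ``in particular'' clause of that theorem --- specialized to the Borel set $A = [-x,x]$ --- yields the statement about confidence intervals.

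There is no genuine obstacle here: the argument is a finite bookkeeping exercise. The only point needing care is to check that the case split on $\gamma$, further subdivided according to whether $\mu = 0$, is exhaustive, and that the sole source of a missing limit of $\psi$ --- its jumps at $\pm C$ --- is correctly isolated by the assumption $\mu \notin \{-C,C\}$.
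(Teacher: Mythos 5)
Your proposal is correct and is exactly what the paper intends: the paper's own proof is the single line ``This is an easy application of Theorem \ref{thm:AsymNorSeq},'' and your verification of the limits of $\psi$ at $0$, at $\mu$, and at $\pm\infty$, together with the exhaustive case split on $\gamma$, is precisely the bookkeeping that line leaves implicit. No further comment is needed.
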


\begin{proof}
This is an easy application of Theorem \ref{thm:AsymNorSeq}.
\end{proof}

Corollary \ref{gev:MainCor} shows that in the important situation of a group sequential trial with outcomes with law $N(\mu,\sigma^2)$, which is stopped either at the first $k_i n$ for which $\left|K_{k_i n}\right| \geq C (k_i n)^\gamma$ or at $k_{L + 1} n$, naive confidence intervals for $\mu$, based on the ordinary sample mean $\widehat{\mu}_{N_n}$, are reliable for large $n$ if $\mu \notin \{-C,C\}$ and $(\gamma,\mu ) \neq (1/2,0)$. We will illustrate this with simulations in section \ref{sec:Sim}.

In the next section, we will provide some examples, which show that the conditions under which Theorem \ref{thm:AsymNorSeq} holds, cannot be omitted.
\section{Some examples}\label{sec:TEx}

In this section, we take $\mu = 0$, $\sigma = 1$, $L = 1$, $k_1 = 1$ and $k_2 = 2$. Furthermore, we fix independent random variables $\xi$ and $\eta$ with law $N(0,1)$. Then it follows from Theorem \ref{thm:FunEqNormTran}, and (\ref{eq:mni}), (\ref{eq:Mn}) and (\ref{eq:specifyPsiMni}), that, for each $n \in \mathbb{N}_0$ and each bounded Borel measurable test function $h : \mathbb{R} \rightarrow \mathbb{R}$,
\begin{eqnarray}
\lefteqn{\mathbb{E}\left[\left\{h(\xi) - h\left(\sqrt{N_n}\left(\widehat{\mu}_{N_n} - \mu\right)\right)\right\}\right]}\label{eq:ExampleBasis}\\
&=&\mathbb{E}\left[h(\xi) \left\{\psi\left(\xi n^{\frac{1}{2} - \gamma} \right) - \psi\left(\frac{\sqrt{2}}{2}\left( \xi + \eta \right) n^{\frac{1}{2} - \gamma}\right)\right\}\right].\nonumber
\end{eqnarray}

Recall that the Kolmogorov distance between random variables $X$ and $Y$ is given by
$$K(X,Y) = \sup_{x \in \mathbb{R}} \left|\mathbb{P}\left[X \leq x\right] - \mathbb{P}\left[Y \leq x\right]\right|.$$
It is well known that, for a continuous random variable $X$ and a sequence of random variables $(Y_n)_n$, $K(X,Y_n) \rightarrow 0$ if and only if $(Y_n)_n$ converges weakly to $X$. For more information on this distance, we refer the reader to \cite{R91} and \cite{Z83}.

We will now derive two interesting examples from equation (\ref{eq:ExampleBasis}).

\begin{vb}\label{vb:Gamma1/2}
Take $\gamma = 1/2$ and, for $C \in \mathbb{R}^+_0$, $\psi$ of the form (\ref{eq:PsiEx}). Thus, for $n \in \mathbb{N}_0$, this example corresponds to a trial with independent standard normally distributed outcomes, of maximal length $2n$, and in which one interim analysis is performed after having collected $n$ data. The trial is stopped if the absolute value of the sum of the first $n$ data exceeds $C n^{1/2}$, and continued otherwise. Let, for $x \in \mathbb{R}$, $1_{\left]-\infty,x\right]}$ be the characteristic function of the set $\left]-\infty,x\right]$. Then, $\Phi$ being the standard normal cdf, a straightforward calculation allows us to derive from (\ref{eq:ExampleBasis}) that, for each $x \in \mathbb{R}$ and each $n \in \mathbb{N}_0$,
\begin{eqnarray}
\lefteqn{\mathbb{E}\left[\left\{1_{\left]-\infty,x\right]}(\xi) - 1_{\left]-\infty,x\right]}\left(\sqrt{N_n}\left(\widehat{\mu}_{N_n} - \mu\right)\right)\right\}\right]}\label{eq:ExampleBasisFirstUsed}\\
&=& \mathbb{E}\left[1_{\left[-\infty,x\right[}(\xi) 1_{\left]-C,C\right[}\left(\frac{\sqrt{2}}{2} (\xi + \eta)\right)\right] - \mathbb{E}\left[1_{\left]-\infty,x\right]}(\xi) 1_{\left]-C,C\right[}(\xi)\right]\nonumber\\
&=& \mathbb{E}\left[\Phi\left(\left(\sqrt{2} C - \eta\right) \wedge x\right) - \Phi\left( \left(- \sqrt{2} C - \eta \right) \wedge x\right)\right] - \left[\Phi\left( C \wedge x \right) - \Phi\left(- C \wedge x\right)\right]\nonumber,
\end{eqnarray}
$\wedge$ standing for the operation that takes the minimum of two numbers. In particular, taking $x = - C$ in (\ref{eq:ExampleBasisFirstUsed}), it holds for each $n \in \mathbb{N}_0$ that
\begin{eqnarray*}
\lefteqn{\mathbb{E}\left[\left\{1_{\left]-\infty,-C\right]}(\xi) - 1_{\left]-\infty,-C\right]}\left(\sqrt{N_n}\left(\widehat{\mu}_{N_n} - \mu\right)\right)\right\}\right]}\label{eq:KolUpAux}\\
&=& \mathbb{E}\left[\Phi\left(\left(\sqrt{2} C - \eta\right) \wedge -C\right) - \Phi\left(\left( - \sqrt{2} C - \eta \right)\wedge -C\right)\right],
\end{eqnarray*}
which, considering the event $\left\{0 \leq \eta \leq \sqrt{2} C\right\}$,
\begin{eqnarray*}
&\geq& \mathbb{P}\left[0 \leq \eta \leq \sqrt{2} C\right] \left[\Phi(- C) - \Phi\left(-\sqrt{4} C\right)\right]\nonumber\\
&=& \left[\Phi\left(\sqrt{2} C\right) - \Phi(0)\right] \left[\Phi\left(-C\right) - \Phi\left(-\sqrt{4} C\right)\right] > 0,\nonumber
\end{eqnarray*}
from which we conclude that
\begin{eqnarray*}
\lefteqn{K\left(\xi,\sqrt{N_n}\left(\widehat{\mu}_{N_n} - \mu\right)\right)}\\
&=& \sup_{x \in \mathbb{R}} \left|\mathbb{E}\left[\Phi\left(\left(\sqrt{2} C - \eta\right) \wedge x\right) - \Phi\left(\left( - \sqrt{2} C - \eta \right)\wedge x\right)\right] - \left[\Phi\left( C \wedge x \right) - \Phi\left(- C \wedge x\right)\right]\right|\nonumber\\
&\geq& \left[\Phi\left(\sqrt{2} C\right) - \Phi(0)\right] \left[\Phi\left(-C\right) - \Phi\left(-\sqrt{4} C\right)\right] > 0.\nonumber
\end{eqnarray*}
That is, for each $n \in \mathbb{N}_0$, the Kolmogorov distance between $\xi$ and $\sqrt{N_n}\left(\widehat{\mu}_{N_n} - \mu\right)$ takes a strictly positive constant value. We infer that $\left(\sqrt{N_n}\left(\widehat{\mu}_{N_n} - \mu\right)\right)_n$ fails to converge weakly to $\xi$. In particular, case (D) of Theorem \ref{thm:AsymNorSeq}  is not extendable to $\gamma = 1/2$ and $\mu = 0$, even if $\psi$ has a finite limit in $0$ and coinciding finite limits in $-\infty$ and $\infty$.
\end{vb}

\begin{vb}\label{vb:Psi0}
Take $\gamma \in \mathbb{R}^+_0$ arbitrary and $\psi = 1_{\left]-\infty,0\right]}$, the characteristic function of the set $\left]-\infty,0\right]$. Thus, for $n \in \mathbb{N}_0$, this example corresponds to a trial with independent standard normally distributed outcomes, of maximal length $2n$, and in which one interim analysis is performed after having collected $n$ data. The trial is stopped if the sum of the first $n$ data is negative, and continued otherwise. Let, for $x \in \mathbb{R}$, $1_{\left]-\infty,x\right]}$ be the characteristic function of the set $\left]-\infty,x\right]$. Then, $\Phi$ being the standard normal cdf, a straightforward calculation allows us to derive from (\ref{eq:ExampleBasis}) that, for each $x \in \mathbb{R}$ and each $n \in \mathbb{N}_0$,
\begin{eqnarray*}
\lefteqn{\mathbb{E}\left[\left\{1_{\left]-\infty,x\right]}(\xi) - 1_{\left]-\infty,x\right]}\left(\sqrt{N_n}\left(\widehat{\mu}_{N_n} - \mu\right)\right)\right\}\right]}\label{eq:ExampleBasisSecondUsed}\\
&=& \mathbb{E}\left[1_{\left]-\infty,x\right]}(\xi) 1_{\left]-\infty,0\right]}(\xi)\right]  - \mathbb{E}\left[1_{\left]-\infty,x\right]}(\xi) 1_{\left]-\infty,0\right]}\left(\xi + \eta\right)\right] \\
&=& \Phi(x \wedge 0) - \mathbb{E}\left[1_{\left]-\infty,x\right]}(\xi)\Phi(-\xi)\right]\\
&=& \Phi(x \wedge 0) - \Phi(x) + \Phi^2(x)/2, 
\end{eqnarray*}
$\wedge$ standing for the operation that takes the minimum of two numbers. We deduce that, for each $n \in \mathbb{N}_0$,
\begin{eqnarray*}
K\left(\xi,\sqrt{N_n}\left(\widehat{\mu}_{N_n} - \mu\right)\right) = \sup_{x \in \mathbb{R}} \left| \Phi(x \wedge 0) - \Phi(x) + \Phi^2(x)/2 \right|  = 1/8.
\end{eqnarray*}
We infer that $\left(\sqrt{N_n}\left(\widehat{\mu}_{N_n} - \mu\right)\right)_n$ fails to converge weakly to $\xi$. This shows that in the cases (A), (B), (C) (2) and (E) (2) of Theorem \ref{thm:AsymNorSeq}, the conditions on $\psi$ cannot be omitted.
\end{vb}

\section{Simulations}\label{sec:Sim}

In order to illustrate Theorem \ref{thm:AsymNorSeq} and Corollary \ref{gev:MainCor}, we have conducted the following simulation study. 

For $\mu \in \{-1,0,1\}$, $n \in \{10,50,100,500\}$, $C \in \{0,1,2\}$, and $\gamma \in \{0,0.25,0.5,0.75,1,2\}$, we have simulated a group sequential trial with the following properties.

\begin{enumerate}
\item Independent observations with distribution $N(\mu,1)$ are collected.

\item Interim analyses of the sum of the data are performed at $n$ and $2n$.

\item 
\begin{enumerate}
\item For the one-sided stopping rule, the trial is stopped either at the first $i n $, where $i \in \{1,2\}$, for which 
\begin{equation}
\sum_{k = 1}^{in} X_k \geq C (in)^\gamma,\label{eq:OneSidedStoppingRule}
\end{equation}
or at $3n$.

\item For the two-sided stopping rule, the trial is stopped either at the first $in$, where $i \in \{1,2\}$, for which
\begin{equation}
\left|\sum_{k = 1}^{in} X_k\right| \geq C (in)^\gamma,\label{eq:TwoSidedStoppingRule}
\end{equation}
or at $3n$.
\end{enumerate}
\end{enumerate}

Notice that the final sample size $N_n$ of the above trial can take the values $n$, $2n$, and $3n$. Furthermore, the above stopping rules are special cases of the general stopping rule (\ref{eq:StoppingRule}), where the one-sided stopping rule (\ref{eq:OneSidedStoppingRule}) corresponds to the choice
\begin{equation}
\psi_{C,1}(x) = \left\{\begin{array}{clrr}      
1 &\textrm{ if }& x \geq C\\       
0 &\textrm{ if }&  x< C
\end{array}\right.,\label{eq:PsiEx}
\end{equation}
and the two-sided stopping rule (\ref{eq:TwoSidedStoppingRule}) to the choice
\begin{equation}
\psi_{C,2}(x) = \left\{\begin{array}{clrr}      
1 &\textrm{ if }& \left|x\right| \geq C\\       
0 &\textrm{ if }&  \left|x\right| < C
\end{array}\right..\label{eq:PsiEx}
\end{equation}

For each fixed choice of the parameters $\mu$, $n$, $C$, and $\gamma$, we have calculated the following values for the sample mean 
$\widehat{\mu}_{N_n} = \frac{1}{N_n}\sum_{k = 1}^{N_n} X_k$ for the stopping rules (\ref{eq:OneSidedStoppingRule}) and (\ref{eq:TwoSidedStoppingRule}):

\begin{enumerate}
\item the {\em average lower  confidence limit}, the {\em average upper confidence limit}, and the {\em true coverage probability} for the interval $\left[\widehat{\mu}_{N_n} - 1.96/ \sqrt{N_n}, \widehat{\mu}_{N_n} + 1.96 /\sqrt{N_n}\right]$, which would be a $95 \%$ confidence interval for $\mu$ if $\sqrt{N_n} (\widehat{\mu}_{N_n} - \mu)$ were standard normally distributed, 
\item the {\em Kolmogorov distance} between the empirical distribution of $\sqrt{N_n} (\widehat{\mu}_{N_n} - \mu)$ and the standard normal distribution, which is close to $0$ if and only if $\sqrt{N_n} (\widehat{\mu}_{N_n} - \mu)$ is close to being standard normally distributed,
\item the {\em average length of the trial}.  
\end{enumerate}

From the calculated numbers, which can be found in Table 2 of the appendix, we can draw the following conclusions.

\begin{enumerate}
\item For $n \geq 50$, the true coverage probability of the interval 
$$\left[\widehat{\mu}_{N_n} - 1.96/ \sqrt{N_n}, \widehat{\mu}_{N_n} + 1.96 /\sqrt{N_n}\right]$$ 
is in all cases close to $95 \%$. So using this type of naive confidence interval after a group sequential trial turns out to be a reliable method.

\item Asympotic normality of the quantity $\sqrt{N_n} (\widehat{\mu}_{N_n} - \mu)$ is more subtle. In many cases, the Kolmogorov distance between the empirical distribution of  $\sqrt{N_n} (\widehat{\mu}_{N_n} - \mu)$ and the standard normal distribution is already below $0.05$ for $n = 50$, implying that the naively normalized sample mean is close to asymptotic normality. However, there are a few exceptions for which the Kolmogorov distance is substantially larger than $0.05$ for $n = 50$. These exceptions are listed in Table \ref{table:LargeD} below. If an exception turns out to be a case in which Theorem \ref{thm:AsymNorSeq} guarantees asymptotic normality, we have mentioned this.\ \\

\begin{center}
    \captionof{table}{Cases where the Kolmogorov distance is large for $n = 50$}
    \label{table:LargeD}
    \begin{tabular}{l l l l l l}
    \hline
    $\mu$ & C & $\gamma$ & one- or two-sided &Kol. Dist. &case in Theorem \ref{thm:AsymNorSeq} \\ \hline
     ~0&  2& 0 &  two-sided& 0.111& E (2)\\ \hline 
     ~0& 2&  0.25&two-sided &0.147& E (2)\\ \hline 
     ~0& 2&  0& one-sided&0.176& /\\\hline 
     ~0 &2&0.25&one-sided&0.130& /\\\hline 
     -1 &1&1&two-sided&0.190&/\\\hline 
     ~0 &1&0&two-sided&0.075&E (2)\\\hline 
     ~0 &1&0.25&two-sided&0.136&E (2)\\\hline 
     ~0 &1&0.5&two-sided&0.118&/\\\hline 
     ~1 &1&1&two-sided&0.187&/\\\hline 
     ~0 &1&0&one-sided&0.183&/\\\hline 
     ~0 &1&0.25&one-sided&0.158&/\\\hline 
     ~0 &1&0.5&one-sided&0.126&/\\\hline 
     ~1 &1&1&one-sided&0.187&/\\\hline 
     ~0 &0&arbitrary&one-sided&0.187&/\\\hline 

    \end{tabular}
\end{center}
\ \\
We conclude that the quantity $\sqrt{N_n} (\widehat{\mu}_{N_n} - \mu)$  is close to a standard normal distribution for $n = 50$ in all cases in which Theorem \ref{thm:AsymNorSeq}  guarantees asymptotic normality, except in the case E (2). Indeed, E (2) is the only case in which asymptotic normality is reached more slowly.

\end{enumerate}
\section{Concluding remarks}\label{sec:ConRem}

In this paper, we have studied a broad class of group sequential trials, with random length $N_n$, in which independent observations $X_1,X_2, \ldots$ with law $N(\mu,\sigma^2)$ are collected, and in which interim analyses of the sum of the observations are performed. After each interim analysis, one decides whether the trial is stopped or continued, based on the stopping rule (\ref{eq:StoppingRule}), with $\psi$ a fixed Borel measurable map of $\mathbb{R}$ into $\left[0,1\right]$, $\gamma \in \mathbb{R}^+_0$ a fixed shape parameter, and, for $m \in \mathbb{N}_0$, $K_m$ the sum of the first $m$ observations. We have illustrated that this mathematical model contains several interesting settings, which have been studied extensively in the literature. In particular, the choice $\gamma = 1/2$ leads to the Pocock boundaries, and the choice $\gamma = 0$ to the O'Brien-Fleming boundaries.

In the above described group sequential trial setting, we have studied the asymptotic distribution of the sample mean $\widehat{\mu}_{N_n} = K_{N_n}/N_n$, where, as $n \rightarrow \infty$, the maximal number of observations in the trial increases to infinity, while the number $L$ of interim analyses is kept fixed, and, for every $i \in \{1,\ldots,L\}$, the ratio of the number of observations collected at the $i$-th interim analysis to the maximal number of observations is also kept fixed.

We have shown in Theorem \ref{thm:AsymNorSeq}, our main result, that, in many realistic cases, the quantity $\frac{\sqrt{N_n}}{\sigma}\left(\widehat{\mu}_{N_n} - \mu\right)$ is asymptotically normal for the total variation distance, from which we have derived that it is often safe to rely on confidence intervals of the type $\left[\widehat{\mu}_{N_n} - \frac{\sigma}{\sqrt{N_n}} x , \widehat{\mu}_{N_n} + \frac{\sigma}{\sqrt{N_n}} x \right]$ for $\mu$. A simulation study has confirmed our theoretical findings.

\clearpage
\setcounter{page}{1}
\renewcommand{\thepage}{A.\arabic{page}}

\begin{center}

\title\bfseries{On asymptotic normality in estimation after a group sequential trial}

\vspace*{4mm}

\author{Ben Berckmoes, Anna Ivanova, Geert Molenberghs}

\vspace*{4mm}

{\bfseries\Large Appendix}\label{sec:Appx}

\end{center}

\begin{center}
\begin{longtable}{llllccccr}
\caption{\em Simulation results for the normal case with the number of simulated i.i.d.\ draws per sample 400, the number of simulated samples 1000. Values for standard deviation is kept fixed, $\sigma=1$. CL: confidence limit. Cov.Prob: coverage probability. Kol.Dist.: Kolmogorov distance.\label{simul1}}\\
\hline
\hline
  $\mu$&$n$&$C$&$\gamma$&Lower CL& Upper CL& Cov.Prob.&Kol.Dist.&Aver.Size\\
\hline
\hline
\endfirsthead
\multicolumn{9}{c}%
{\tablename\ \thetable\ -- \it{Continued from previous page}} \\
\hline
\hline
  $\mu$&$n$&$C$&$\gamma$&Lower CL& Upper CL& Cov.Prob.&Kol.Dist. &Aver.Size\\\hline
\hline
\endhead
\hline \multicolumn{9}{r}{\it{Continued on next page}} \\
\endfoot
\hline
\endlastfoot

\multicolumn{9}{c}{no stopping} \\
\hline
-1& & & &-1.098&-0.902&0.955&0.021&400\\
~0& & & &-0.098&~0.098&0.955&0.021&400\\
~1&	&	&	&~0.902&~1.098&0.955&0.021&400\\
\hline
\multicolumn{9}{c}{two-sided rule}\\
\hline
-1&10&2&0&-1.602&-0.403&0.914&0.019&10\\
-1&10&2&0.25&-1.603&-0.407&0.919&0.019&10\\
-1&10&2&0.50&-1.612&-0.453&0.926&0.056&11\\
-1&10&2&0.75&-1.547&-0.625&0.923&0.131&20\\
-1&10&2&1&-1.358&-0.647&0.939&0.018&30\\
-1&10&2&2&-1.357&-0.647&0.939&0.018&30\\
~0&10&2&0&-0.502&~0.503&0.906&0.126&17\\
~0&10&2&0.25&-0.435&~0.429&0.897&0.109&23\\
~0&10&2&0.50&-0.372&~0.360&0.900&0.036&29\\
~0&10&2&0.75&-0.357&~0.354&0.938&0.018&30\\
~0&10&2&1&-0.357&~0.353&0.939&0.018&30\\
~0&10&2&2&-0.357&~0.353&0.939&0.018&30\\
~1&10&2&0&~0.405&~1.604&0.915&0.019&10\\
~1&10&2&0.25&~0.414&~1.607&0.925&0.019&10\\
~1&10&2&0.50&~0.456&~1.618&0.942&0.064&11\\
~1&10&2&0.75&~0.628&~1.552&0.926&0.139&20\\
~1&10&2&1&~0.644&~1.355&0.937&0.018&30\\
~1&10&2&2&~0.643&~1.353&0.939&0.018&30\\
\hline
\multicolumn{9}{c}{one-sided rule}\\
\hline
-1&10&2&0&-1.357&-0.647&0.939&0.018&30\\
-1&10&2&0.25&-1.357&-0.647&0.939&0.018&30\\
-1&10&2&0.50&-1.357&-0.647&0.939&0.018&30\\
-1&10&2&0.75&-1.357&-0.647&0.939&0.018&30\\
-1&10&2&1&-1.357&-0.647&0.939&0.018&30\\
-1&10&2&2&-1.357&-0.647&0.939&0.018&30\\
~0&10&2&0&-0.351&~0.509&0.924&0.149&23\\
~0&10&2&0.25&-0.340&~0.445&0.919&0.099&26\\
~0&10&2&0.50&-0.350&~0.369&0.922&0.026&29\\
~0&10&2&0.75&-0.357&~0.354&0.938&0.018&30\\
~0&10&2&1&-0.357&~0.353&0.939&0.018&30\\
~0&10&2&2&-0.357&~0.353&0.939&0.018&30\\
~1&10&2&0&~0.405&~1.604&0.915&0.019&10\\
~1&10&2&0.25&~0.414&~1.607&0.925&0.019&10\\
~1&10&2&0.50&~0.456&~1.618&0.942&0.064&11\\
~1&10&2&0.75&~0.628&~1.552&0.926&0.139&20\\
~1&10&2&1&~0.644&~1.355&0.937&0.018&30\\
~1&10&2&2&~0.643&~1.353&0.939&0.018&30\\
\hline
\multicolumn{9}{c}{two-sided rule}\\
\hline
-1&50&2&0&-1.279&-0.727&0.939&0.023&50\\
-1&50&2&0.25&-1.279&-0.727&0.939&0.023&50\\
-1&50&2&0.50&-1.279&-0.727&0.939&0.023&50\\
-1&50&2&0.75&-1.282&-0.736&0.958&0.036&52\\
-1&50&2&1&-1.159&-0.839&0.957&0.020&150\\
-1&50&2&2&-1.159&-0.839&0.957&0.020&150\\
~0&50&2&0&-0.262&~0.255&0.938&0.111&62\\
~0&50&2&0.25&-0.226&~0.217&0.928&0.147&93\\
~0&50&2&0.50&-0.165&~0.166&0.913&0.020&145\\
~0&50&2&0.75&-0.159&~0.161&0.957&0.020&150\\
~0&50&2&1&-0.159&~0.161&0.957&0.020&150\\
~0&50&2&2&-0.159&~0.161&0.957&0.020&150\\
~1&50&2&0&~0.721&~1.273&0.939&0.023&50\\
~1&50&2&0.25&~0.721&~1.273&0.939&0.023&50\\
~1&50&2&0.50&~0.721&~1.273&0.939&0.023&50\\
~1&50&2&0.75&~0.731&~1.276&0.955&0.026&52\\
~1&50&2&1&~0.841&~1.161&0.957&0.020&150\\
~1&50&2&2&~0.841&~1.161&0.957&0.020&150\\
\hline
\multicolumn{9}{c}{one-sided rule}\\
\hline
-1&50&2&0&-1.159&-0.839&0.957&0.020&150\\
-1&50&2&0.25&-1.159&-0.839&0.957&0.020&150\\
-1&50&2&0.50&-1.159&-0.839&0.957&0.020&150\\
-1&50&2&0.75&-1.159&-0.839&0.957&0.020&150\\
-1&50&2&1&-1.159&-0.839&0.957&0.020&150\\
-1&50&2&2&-1.159&-0.839&0.957&0.020&150\\
~0&50&2&0&-0.172&~0.249&0.947&0.176&105\\
~0&50&2&0.25&-0.158&~0.221&0.943&0.130&122\\
~0&50&2&0.50&-0.157&~0.169&0.936&0.020&147\\
~0&50&2&0.75&-0.159&~0.161&0.957&0.020&150\\
~0&50&2&1&-0.159&~0.161&0.957&0.020&150\\
~0&50&2&2&-0.159&~0.161&0.957&0.020&150\\
~1&50&2&0&~0.721&~1.273&0.939&0.023&50\\
~1&50&2&0.25&~0.721&~1.273&0.939&0.023&50\\
~1&50&2&0.50&~0.721&~1.273&0.939&0.023&50\\
~1&50&2&0.75&~0.731&~1.276&0.955&0.026&52\\
~1&50&2&1&~0.841&~1.161&0.957&0.020&150\\
~1&50&2&2&~0.841&~1.161&0.957&0.020&150\\
\hline
\multicolumn{9}{c}{two-sided rule}\\
\hline
-1&100&2&0&-1.196&-0.805&0.955&0.018&100\\
-1&100&2&0.25&-1.196&-0.805&0.955&0.018&100\\
-1&100&2&0.50&-1.196&-0.805&0.955&0.018&100\\
-1&100&2&0.75&-1.196&-0.805&0.955&0.018&100\\
-1&100&2&1&-1.113&-0.887&0.954&0.020&300\\
-1&100&2&2&-1.113&-0.887&0.954&0.020&300\\
~0&100&2&0&-0.186&~0.185&0.955&0.073&120\\
~0&100&2&0.25&-0.163&~0.160&0.948&0.142&175\\
~0&100&2&0.50&-0.117&~0.116&0.919&0.019&291\\
~0&100&2&0.75&-0.113&~0.113&0.954&0.020&300\\
~0&100&2&1&-0.113&~0.113&0.954&0.020&300\\
~0&100&2&2&-0.113&~0.113&0.954&0.020&300\\
~1&100&2&0&~0.804&~1.195&0.955&0.018&100\\
~1&100&2&0.25&~0.804&~1.195&0.955&0.018&100\\
~1&100&2&0.50&~0.804&~1.195&0.955&0.018&100\\
~1&100&2&0.75&~0.804&~1.195&0.955&0.018&100\\
~1&100&2&1&~0.887&~1.113&0.954&0.020&300\\
~1&100&2&2&~0.887&~1.113&0.954&0.020&300\\
\hline
\multicolumn{9}{c}{one-sided rule}\\
\hline
-1&100&2&0&-1.113&-0.887&0.954&0.020&300\\
-1&100&2&0.25&-1.113&-0.887&0.954&0.012&300\\
-1&100&2&0.50&-1.113&-0.887&0.954&0.020&300\\
-1&100&2&0.75&-1.113&-0.887&0.954&0.020&300\\
-1&100&2&1&-1.113&-0.887&0.954&0.020&300\\
-1&100&2&2&-1.113&-0.887&0.954&0.020&300\\
~0&100&2&0&-0.123&~0.178&0.961&0.183&204\\
~0&100&2&0.25&-0.114&~0.160&0.957&0.147&237\\
~0&100&2&0.50&-0.111&~0.118&0.940&0.019&296\\
~0&100&2&0.75&-0.113&~0.113&0.954&0.020&300\\
~0&100&2&1&-0.113&~0.113&0.954&0.020&300\\
~0&100&2&2&-0.113&~0.113&0.954&0.020&300\\
~1&100&2&0&~0.804&~1.195&0.955&0.018&100\\
~1&100&2&0.25&~0.804&~1.195&0.955&0.018&100\\
~1&100&2&0.50&~0.804&~1.195&0.955&0.018&100\\
~1&100&2&0.75&~0.804&~1.195&0.955&0.018&100\\
~1&100&2&1&~0.887&~1.113&0.954&0.020&300\\
~1&100&2&2&~0.887&~1.113&0.954&0.020&300\\
\hline
\multicolumn{9}{c}{two-sided rule}\\
\hline
-1&500&2&0&-1.087&-0.912&0.957&0.020&500\\
-1&500&2&0.25&-1.087&-0.912&0.957&0.020&500\\
-1&500&2&0.50&-1.087&-0.912&0.957&0.020&500\\
-1&500&2&0.75&-1.087&-0.912&0.957&0.020&500\\
-1&500&2&1&-1.050&-0.949&0.953&0.024&1500\\
-1&500&2&2&-1.050&-0.949&0.953&0.024&1500\\
~0&500&2&0&-0.085&~0.086&0.957&0.050&540\\
~0&500&2&0.25&-0.077&~0.079&0.953&0.134&719\\
~0&500&2&0.50&-0.052&~0.053&0.901&0.030&1440\\
~0&500&2&0.75&-0.050&~0.051&0.953&0.024&1500\\
~0&500&2&1&-0.050&~0.051&0.953&0.024&1500\\
~0&500&2&2&-0.050&~0.051&0.953&0.024&1500\\
~1&500&2&0&~0.913&~1.088&0.957&0.020&500\\
~1&500&2&0.25&~0.913&~1.088&0.957&0.020&500\\
~1&500&2&0.50&~0.913&~1.088&0.957&0.020&500\\
~1&500&2&0.75&~0.913&~1.088&0.957&0.020&500\\
~1&500&2&1&~0.950&~1.051&0.953&0.024&1500\\
~1&500&2&2&~0.950&~1.051&0.953&0.024&1500\\
\hline
\multicolumn{9}{c}{one-sided rule}\\
\hline
-1&500&2&0&-1.050&-0.949&0.953&0.024&1500\\
-1&500&2&0.25&-1.050&-0.949&0.953&0.024&1500\\
-1&500&2&0.50&-1.050&-0.949&0.953&0.024&1500\\
-1&500&2&0.75&-1.050&-0.949&0.953&0.024&1500\\
-1&500&2&1&-1.050&-0.949&0.953&0.024&1500\\
-1&500&2&2&-1.050&-0.949&0.953&0.024&1500\\
~0&500&2&0&-0.057&~0.082&0.963&0.194&963\\
~0&500&2&0.25&-0.052&~0.078&0.961&0.201&1083\\
~0&500&2&0.50&-0.049&~0.054&0.927&0.030&1469\\
~0&500&2&0.75&-0.050&~0.051&0.953&0.024&1500\\
~0&500&2&1&-0.050&~0.051&0.953&0.024&1500\\
~0&500&2&2&-0.050&~0.051&0.953&0.024&1500\\
~1&500&2&0&~0.913&~1.088&0.957&0.020&500\\
~1&500&2&0.25&~0.913&~1.088&0.957&0.020&500\\
~1&500&2&0.50&~0.913&~1.088&0.957&0.020&500\\
~1&500&2&0.75&~0.913&~1.088&0.957&0.020&500\\
~1&500&2&1&~0.950&~1.051&0.953&0.024&1500\\
~1&500&2&2&~0.950&~1.051&0.953&0.024&1500\\
\hline
\multicolumn{9}{c}{two-sided rule}\\
\hline
-1&10&1&0&-1.600&-0.401&0.912&0.019&10\\
-1&10&1&0.25&-1.601&-0.402&0.913&0.019&10\\
-1&10&1&0.50&-1.603&-0.406&0.918&0.019&10\\
-1&10&1&0.75&-1.610&-0.439&0.929&0.041&11\\
-1&10&1&1&-1.575&-0.601&0.926&0.184&19\\
-1&10&1&2&-1.357&-0.647&0.939&0.018&30\\
~0&10&1&0&-0.553&~0.556&0.910&0.106&13\\
~0&10&1&0.25&-0.509&~0.515&0.907&0.142&16\\
~0&10&1&0.50&-0.448&~0.441&0.897&0.128&22\\
~0&10&1&0.75&-0.379&~0.369&0.887&0.042&28\\
~0&10&1&1&-0.357&~0.355&0.937&0.018&30\\
~0&10&1&2&-0.357&~0.353&0.939&0.018&30\\
~1&10&1&0&~0.403&~1.602&0.912&0.019&10\\
~1&10&1&0.25&~0.405&~1.604&0.914&0.019&10\\
~1&10&1&0.50&~0.410&~1.606&0.919&0.019&10\\
~1&10&1&0.75&~0.443&~1.617&0.940&0.044&11\\
~1&10&1&1&~0.602&~1.576&0.928&0.176&19\\
~1&10&1&2&~0.643&~1.353&0.939&0.018&30\\
\hline
\multicolumn{9}{c}{one-sided rule}\\
\hline
-1&10&1&0&-1.357&-0.646&0.938&0.018&30\\
-1&10&1&0.25&-1.357&-0.646&0.938&0.018&30\\
-1&10&1&0.50&-1.357&-0.647&0.939&0.018&30\\
-1&10&1&0.75&-1.357&-0.647&0.939&0.018&30\\
-1&10&1&1&-1.357&-0.647&0.939&0.018&30\\
-1&10&1&2&-1.357&-0.647&0.939&0.018&30\\
~0&10&1&0&-0.370&0.547&0.927&0.178&21\\
~0&10&1&0.25&-0.353&0.518&0.924&0.165&23\\
~0&10&1&0.50&-0.340&0.456&0.919&0.115&26\\
~0&10&1&0.75&-0.347&0.3812&0.916&0.031&29\\
~0&10&1&1&-0.356&0.355&0.937&0.018&30\\
~0&10&1&2&-0.3571&0.353&0.939&0.018&30\\
~1&10&1&0&~0.403&1.602&0.912&0.019&30\\
~1&10&1&0.25&~0.405&1.604&0.914&0.019&10\\
~1&10&1&0.50&~0.410&1.606&0.919&0.019&10\\
~1&10&1&0.75&~0.443&1.617&0.940&0.044&11\\
~1&10&1&1&~0.602&1.576&0.928&0.176&19\\
~1&10&1&2&~0.643&1.353&0.939&0.018&30\\
\hline
\multicolumn{9}{c}{two-sided rule}\\
\hline
-1&50&1&0&-1.279&-0.727&0.939&0.023&50\\
-1&50&1&0.25&-1.279&-0.727&0.939&0.023&50\\
-1&50&1&0.50&-1.279&-0.727&0.939&0.023&50\\
-1&50&1&0.75&-1.279&-0.727&0.939&0.023&50\\
-1&50&1&1&-1.266&-0.819&0.949&0.190&93\\
-1&50&1&2&-1.159&-0.839&0.957&0.020&150\\
~0&50&1&0&-0.272&~0.263&0.938&0.075&56\\
~0&50&1&0.25&-0.255&~0.247&0.935&0.135&69\\
~0&50&1&0.50&-0.205&~0.201&0.926&0.126&110\\
~0&50&1&0.75&-0.161&~0.161&0.948&0.020&149\\
~0&50&1&1&-0.159&~0.161&0.957&0.020&150\\
~0&50&1&2&-0.159&~0.161&0.957&0.020&150\\
~1&50&1&0&~0.721&~1.273&0.939&0.023&50\\
~1&50&1&0.25&~0.721&~1.273&0.939&0.023&50\\
~1&50&1&0.50&~0.721&~1.273&0.939&0.023&50\\
~1&50&1&0.75&~0.721&~1.273&0.939&0.023&50\\
~1&50&1&1&~0.819&~1.261&0.948&0.187&95\\
~1&50&1&2&~0.841&~1.161&0.957&0.020&150\\
\hline
\multicolumn{9}{c}{one-sided rule}\\
\hline
-1&50&1&0&-1.159&-0.839&0.957&0.020&150\\
-1&50&1&0.25&-1.159&-0.839&0.957&0.020&150\\
-1&50&1&0.50&-1.159&-0.839&0.957&0.020&150\\
-1&50&1&0.75&-1.159&-0.839&0.957&0.020&150\\
-1&50&1&1&-1.159&-0.839&0.957&0.020&150\\
-1&50&1&2&-1.159&-0.839&0.957&0.020&150\\
~0&50&1&0&-0.177&~0.255&0.947&0.183&100\\
~0&50&1&0.25&-0.168&~0.243&0.946&0.158&109\\
~0&50&1&0.50&-0.155&~0.208&0.942&0.126&130\\
~0&50&1&0.75&-0.158&~0.162&0.953&0.020&150\\
~0&50&1&1&-0.159&~0.161&0.957&0.020&150\\
~0&50&1&2&-0.159&~0.161&0.957&0.020&150\\
~1&50&1&0&0.721&~1.273&0.939&0.023&50\\
~1&50&1&0.25&~0.721&~1.273&0.939&0.023&50\\
~1&50&1&0.50&~0.721&~1.273&0.939&0.023&50\\
~1&50&1&0.75&~0.721&~1.273&0.939&0.023&50\\
~1&50&1&1&~0.819&~1.261&0.948&0.187&95\\
~1&50&1&2&~0.841&~1.161&0.957&0.020&150\\
\hline
\multicolumn{9}{c}{two-sided rule}\\
\hline
-1&100&1&0&-1.196&-0.805&0.955&0.018&100\\
-1&100&1&0.25&-1.196&-0.805&0.955&0.018&100\\
-1&100&1&0.50&-1.196&-0.805&0.955&0.018&100\\
-1&100&1&0.75&-1.196&-0.805&0.955&0.018&100\\
-1&100&1&1&-1.185&-0.869&0.949&0.184&187\\
-1&100&1&2&-1.113&-0.887&0.954&0.020&300\\
~0&100&1&0&-0.191&~0.190&0.955&0.041&109\\
~0&100&1&0.25&-0.179&~0.178&0.955&0.111&135\\
~0&100&1&0.50&-0.141&~0.143&0.939&0.118&225\\
~0&100&1&0.75&-0.113&~0.113&0.954&0.020&300\\
~0&100&1&1&-0.113&~0.113&0.954&0.020&300\\
~0&100&1&2&-0.113&~0.113&0.954&0.020&300\\
~1&100&1&0&~0.804&~1.195&0.955&0.018&100\\
~1&100&1&0.25&~0.804&~1.195&0.955&0.018&100\\
~1&100&1&0.50&~0.804&~1.195&0.955&0.018&100\\
~1&100&1&0.75&~0.804&~1.195&0.955&0.018&100\\
~1&100&1&1&~0.871&~1.184&0.963&0.188&189\\
~1&100&1&2&~0.887&~1.113&0.954&0.020&300\\
\hline
\multicolumn{9}{c}{one-sided rule}\\
\hline
-1&100&1&0&-1.113&-0.887&0.954&0.020&300\\
-1&100&1&0.25&-1.113&-0.887&0.954&0.020&300\\
-1&100&1&0.50&-1.113&-0.887&0.954&0.020&300\\
-1&100&1&0.75&-1.113&-0.887&0.954&0.020&300\\
-1&100&1&1&-1.113&-0.887&0.954&0.020&300\\
-1&100&1&2&-1.113&-0.887&0.954&0.020&300\\
~0&100&1&0&-0.126&~0.181&0.961&0.184&196\\
~0&100&1&0.25&-0.120&~0.173&0.960&0.178&213\\
~0&100&1&0.50&-0.109&~0.145&0.951&0.109&263\\
~0&100&1&0.75&-0.113&~0.114&0.954&0.020&300\\
~0&100&1&1&-0.113&~0.113&0.954&0.020&300\\
~0&100&1&2&-0.113&~0.113&0.954&0.020&300\\
~1&100&1&0&~0.804&~1.195&0.955&0.018&100\\
~1&100&1&0.25&~0.804&~1.195&0.955&0.018&100\\
~1&100&1&0.50&~0.804&~1.195&0.955&0.018&100\\
~1&100&1&0.75&~0.804&~1.195&0.955&0.018&100\\
~1&100&1&1&~0.871&~1.184&0.963&0.188&189\\
~1&100&1&2&~0.887&~1.113&0.954&0.020&300\\
\hline
\multicolumn{9}{c}{two-sided rule}\\
\hline
-1&500&1&0&-1.087&-0.912&0.957&0.020&500\\
-1&500&1&0.25&-1.087&-0.912&0.957&0.020&500\\
-1&500&1&0.50&-1.087&-0.912&0.957&0.020&500\\
-1&500&1&0.75&-1.087&-0.912&0.957&0.020&500\\
-1&500&1&1&-1.082&-0.941&0.949&0.187&945\\
-1&500&1&2&-1.050&-0.949&0.953&0.024&1500\\
~0&500&1&0&-0.086&~0.087&0.957&0.035&519\\
~0&500&1&0.25&-0.082&~0.084&0.956&0.090&596\\
~0&500&1&0.50&-0.063&~0.064&0.940&0.126&1120\\
~0&500&1&0.75&-0.050&~0.051&0.953&0.024&1500\\
~0&500&1&1&-0.050&~0.051&0.953&0.024&1500\\
~0&500&1&2&-0.050&~0.051&0.953&0.024&1500\\
~1&500&1&0&~0.913&~1.088&0.957&0.020&500\\
~1&500&1&0.25&~0.913&~1.088&0.957&0.020&500\\
~1&500&1&0.50&~0.913&~1.088&0.957&0.020&500\\
~1&500&1&0.75&~0.913&~1.088&0.957&0.020&500\\
~1&500&1&1&~0.942&~1.083&0.964&0.186&936\\
~1&500&1&2&~0.950&~1.051&0.953&0.024&1500\\
\hline
\multicolumn{9}{c}{one-sided rule}\\
\hline
-1&500&1&0&-1.050&-0.949&0.953&0.024&1500\\
-1&500&1&0.25&-1.050&-0.949&0.953&0.024&1500\\
-1&500&1&0.50&-1.050&-0.949&0.953&0.024&1500\\
-1&500&1&0.75&-1.0503&-0.949&0.953&0.024&1500\\
-1&500&1&1&-1.050&-0.949&0.953&0.024&1500\\
-1&500&1&2&-1.050&-0.949&0.953&0.024&1500\\
~0&500&1&0&-0.057&~0.083&0.963&0.189&949\\
~0&500&1&0.25&-0.055&~0.081&0.962&0.207&1004\\
~0&500&1&0.50&-0.049&~0.066&0.953&0.128&1305\\
~0&500&1&0.75&-0.050&~0.051&0.953&0.024&1500\\
~0&500&1&1&-0.050&~0.051&0.953&0.024&1500\\
~0&500&1&2&-0.050&~0.051&0.953&0.024&1500\\
~1&500&1&0&~0.913&~1.088&0.957&0.020&500\\
~1&500&1&0.25&~0.913&~1.088&0.957&0.020&500\\
~1&500&1&0.50&~0.913&~1.088&0.957&0.020&500\\
~1&500&1&0.75&~0.913&~1.088&0.957&0.020&500\\
~1&500&1&1&~0.942&~1.083&0.964&0.186&936\\
~1&500&1&2&~0.950&~1.051&0.953&0.024&1500\\
\hline
\multicolumn{9}{c}{one-sided rule}\\
\hline
-1&10&0&0&-1.356&-0.645&0.937&0.018&30\\
-1&10&0&0.25&-1.356&-0.645&0.937&0.018&30\\
-1&10&0&0.50&-1.356&-0.645&0.937&0.018&30\\
-1&10&0&0.75&-1.356&-0.645&0.937&0.018&30\\
-1&10&0&1&-1.356&-0.645&0.937&0.018&30\\
-1&10&0&2&-1.356&-0.645&0.937&0.018&30\\
~0&10&0&0&-0.398&~0.576&0.928&0.176&19\\
~0&10&0&0.25&-0.398&~0.576&0.928&0.176&19\\
~0&10&0&0.50&-0.398&~0.576&0.928&0.176&19\\
~0&10&0&0.75&-0.398&~0.576&0.928&0.176&19\\
~0&10&0&1&-0.398&~0.576&0.928&0.176&19\\
~0&10&0&2&-0.398&~0.576&0.928&0.176&19\\
~1&10&0&0&~0.402&~1.602&0.911&0.019&10\\
~1&10&0&0.25&~0.402&~1.602&0.911&0.019&10\\
~1&10&0&0.50&~0.402&~1.602&0.911&0.019&10\\
~1&10&0&0.75&~0.402&~1.602&0.911&0.019&10\\
~1&10&0&1&~0.402&~1.602&0.911&0.019&10\\
~1&10&0&2&~0.402&~1.602&0.911&0.019&10\\
\hline
\multicolumn{9}{c}{one-sided rule}\\
\hline
-1&50&0&0&-1.159&-0.839&0.957&0.020&150\\
-1&50&0&0.25&-1.159&-0.839&0.957&0.020&150\\
-1&50&0&0.50&-1.159&-0.839&0.957&0.020&150\\
-1&50&0&0.75&-1.159&-0.839&0.957&0.020&150\\
-1&50&0&1&-1.159&-0.839&0.957&0.020&150\\
-1&50&0&2&-1.159&-0.839&0.957&0.020&150\\
~0&50&0&0&-0.181&0.261&0.948&0.187&95\\ 
~0&50&0&0.25&-0.181&0.261&0.948&0.187&95\\
~0&50&0&0.50&-0.181&0.261&0.948&0.187&95\\
~0&50&0&0.75&-0.181&0.261&0.948&0.187&95\\
~0&50&0&1&-0.181&0.261&0.948&0.187&95\\
~0&50&0&2&-0.181&0.261&0.948&0.187&95\\
~1&50&0&0&~0.721&1.273&0.939&0.023&50\\
~1&50&0&0.25&~0.721&1.273&0.939&0.023&50\\
~1&50&0&0.50&~0.721&1.273&0.939&0.023&50\\
~1&50&0&0.75&~0.721&1.273&0.939&0.023&50\\
~1&50&0&1&~0.721&1.273&0.939&0.023&50\\
~1&50&0&2&~0.721&1.273&0.939&0.023&50\\
\hline
\multicolumn{9}{c}{one-sided rule}\\
\hline
-1&100&0&0&-1.113&-0.887&0.954&0.020&300\\
-1&100&0&0.25&-1.113&-0.887&0.954&0.020&300\\
-1&100&0&0.50&-1.113&-0.887&0.954&0.020&300\\
-1&100&0&0.75&-1.113&-0.887&0.954&0.020&300\\
-1&100&0&1&-1.113&-0.88&0.954&0.020&300\\
-1&100&0&2&-1.113&-0.887&0.954&0.020&300\\
~0&100&0&0&-0.129&0.184&0.963&0.188&189\\
~0&100&0&0.25&-0.129&0.184&0.963&0.188&189\\
~0&100&0&0.50&-0.129&0.184&0.963&0.188&189\\
~0&100&0&0.75&-0.129&0.184&0.963&0.188&189\\
~0&100&0&1&-0.129&0.184&0.963&0.188&189\\
~0&100&0&2&-0.129&0.184&0.963&0.188&189\\
~1&100&0&0&~0.804&1.195&0.955&0.011&100\\
~1&100&0&0.25&~0.804&1.195&0.955&0.011&100\\
~1&100&0&0.50&~0.804&1.195&0.955&0.018&100\\
~1&100&0&0.75&~0.804&1.195&0.955&0.018&100\\
~1&100&0&1&~0.804&1.195&0.955&0.018&100\\
~1&100&0&2&~0.804&1.195&0.955&0.018&100\\
\hline
\multicolumn{9}{c}{one-sided rule}\\
\hline
-1&500&0&0&-1.050&-0.949&0.953&0.024&1500\\
-1&500&0&0.25&-1.050&-0.949&0.953&0.024&1500\\
-1&500&0&0.50&-1.050&-0.949&0.953&0.024&1500\\
-1&500&0&0.75&-1.050&-0.949&0.953&0.024&1500\\
-1&500&0&1&-1.050&-0.949&0.953&0.024&1500\\
-1&500&0&2&-1.050&-0.949&0.953&0.024&1500\\
~0&500&0&0&-0.058&~0.083&0.964&0.186&936\\
~0&500&0&0.25&-0.058&~0.083&0.964&0.186&936\\
~0&500&0&0.50&-0.058&~0.083&0.964&0.186&936\\
~0&500&0&0.75&-0.058&~0.083&0.964&0.186&936\\
~0&500&0&1&-0.058&~0.083&0.964&0.186&936\\
~0&500&0&2&-0.058&~0.083&0.964&0.186&936\\
~1&500&0&0&~0.913&~1.088&0.957&0.020&500\\
~1&500&0&0.25&~0.913&~1.088&0.957&0.020&500\\
~1&500&0&0.50&~0.913&~1.088&0.957&0.020&500\\
~1&500&0&0.75&~0.913&~1.088&0.957&0.020&500\\
~1&500&0&1&~0.913&~1.088&0.957&0.020&500\\
~1&500&0&2&~0.913&~1.088&0.957&0.020&500\\
\hline
\multicolumn{9}{c}{two-sided rule}\\
\hline
~0&100&1&0.25&-0.179&~0.178&0.955&0.111&135\\
~0&500&1&0.25&-0.082&~0.084&0.956&0.090&596\\
~0&1000&1&0.25&-0.059&~0.061&0.942&0.086&1137\\
~0&2000&1&0.25&-0.041&~0.043&0.961&0.066&2312\\
\hline
\multicolumn{9}{c}{two-sided rule}\\
\hline
~0&100&2&0.25&-0.163&~0.160&0.948&0.142&175\\
~0&500&2&0.25&-0.077&~0.079&0.953&0.134&719\\
~0&1000&2&0.25&-0.056&~0.058&0.940&0.119&1331\\
~0&2000&2&0.25&-0.039&~0.041&0.961&0.101&2666\\

\hline
\hline
\end{longtable}
\end{center}

\end{document}